\author{Léo Morin}
\title[]{Spectral asymptotics for the semiclassical Bochner Laplacian of a line bundle with constant rank curvature}
\newcounter{item}
\newtheorem{theorem}[item]{\sffamily Theorem}
\newtheorem{lemma}[item]{\sffamily Lemma}
\newtheorem{corollary}[item]{\sffamily Corollary}
\newtheorem*{theorem*}{\sffamily Theorem}
\newtheorem*{definition*}{\sffamily Definition}
\newtheorem*{proposition*}{\sffamily Proposition}
\newtheorem*{lemma*}{\sffamily Lemma}
\newtheorem*{corollary*}{\sffamily Corollary}
\newtheorem*{assumption*}{\sffamily Assumptions}
\titleformat{\section}{\centering\Large\bfseries}{\thesection \ --}{0.7em}{\Large\bfseries #1}
\titleformat{\subsection}{\centering\large\bfseries}{\thesubsection \ --}{0.4em}{\large\bfseries #1}
\titleformat{\subsubsection}{\centering\bfseries}{\thesubsubsection \ --}{0.4em}{\bfseries #1}
\let\emph\relax
\newcommand{\dd}{\mathrm{d}}
\newcommand{\R}{\mathbf{R}}
\newcommand{\C}{\mathbf{C}}
\newcommand{\N}{\mathbf{N}}
\newcommand{\Z}{\mathbf{Z}}
\newcommand{\B}{\mathbf{B}}
\newcommand{\Cinf}{\mathcal{C}^{\infty}}
\newcommand{\Ld}{\mathsf{L}^2}
\newcommand{\Dom}{\mathsf{Dom}}
\newcommand{\supp}{\mathsf{supp}}
\newcommand{\Lpj}{\mathcal{L}_{p}^{(j)}}
\newcommand{\Lhj}{\mathcal{L}_{h}^{(j)}}
\newcommand{\grandO}{\mathcal{O}}
\begin{document}

\maketitle

\begin{abstract}
The goal of this paper is manyfold. Firstly, we want to give a short introduction to the Bochner Laplacian and explain why it acts locally as a magnetic Laplacian. Secondly, given a confining magnetic field, we use Agmon-like estimates to reduce its spectral study to magnetic Laplacians, in the semiclassical limit. Finally, we use this to translate already-known spectral asymptotics for the magnetic Laplacian to the Bochner Laplacian.
\end{abstract}

\section{Introduction}

\subsection{Motivations and context}

The spectral theory of the magnetic Laplacian, and the Bochner Laplacian, has given rise to many interesting questions. First motivated by the Ginzburg-Landau theory, bound states of the magnetic Laplacian $(ih\dd + A)^* (ih \dd + A)$ on a Riemannian manifold in the semiclassical limit $h \rightarrow 0$ were studied in many works (see the books \cite{FournaisHelffer,Raymond}), and appears to have very various behaviours according to the variations of the magnetic field $B = \dd A$ and the boundary conditions. The first main technique consisted in the construction of approximated eigenfunctions (see for instance the works of Helffer-Mohamed \cite{HelMo96} and Helffer-Kordyukov \cite{HelKor14,HelKor12,HelKor11}). More recently, an other approach was developped, which consists in an approximation of the operator itself, using semiclassical tools such as microlocalisation estimates and Birkhoff normal forms (As in Raymond-Vu Ngoc \cite{Birkhoff2D} and Helffer-Kordyukov-Raymond-Vu Ngoc \cite{Birkhoff3D}). In the semiclassical limit $h \rightarrow 0$, we recover the classical behaviour of a particle exposed to the magnetic field $B$, since the magnetic Laplacian is the quantification of the classical energy.\\

If we are given a magnetic field $B$ which is not exact, there is no potential $A$ and we cannot define the magnetic Laplacian. However, the Bochner Laplacian $\frac{1}{p^2}\Delta^{L^p}$ appears to be the suitable generalization in this case, since it acts \textit{locally} as a magnetic Laplacian. In this context the semiclassical parameter is $p = h^{-1}$. Its spectral theory appears to be deeply related to holomorphic structures and to the Kodaira Laplacian (or the renormalized Bochner Laplacian more generaly). For instance, this is exploited in the works of Marinescu-Savale \cite{MaSa18}, and Kordyukov \cite{Kor19}. In this last paper, the case of non-degenerate magnetic wells with full-rank magnetic field is studied, and expansions of the ground states energies are given, using quasimodes.  In \cite{MagBNF}, similar results were obtained in the special case of the magnetic Laplacian, using a Birkhoff normal form, but also giving a description of semi-excited states, and a Weyl law. In \cite{Morin-PresymplecticBNF}, these result are generalized to constant-rank magnetic fields.\\

The spectral theory of the Bochner Laplacian is also deeply related to the global geometry of complex manifolds. For instance, in \cite{Dem85}, a Weyl law was proven for $\Delta^{L^p}$, and used to get Morse inequalities and Riemann-Roch formulas.\\

In this paper, we use Agmon-like estimates to reduce the spectral theory of the Bochner Laplacian with magnetic wells to magnetic Laplacians. Then, we deduce spectral asymptotics for the Bochner Laplacian using the results of \cite{MagBNF,Morin-PresymplecticBNF}.

\subsection{The Bochner Laplacian on a line bundle}

Let $(M,g)$ be a compact oriented manifold of dimension $d > 1$. We consider a complex line bundle $L \rightarrow M$ over $M$, endowed with a Hermitian metric $h$. In other words, we associate to each $x \in M$ a $1$-dimensional complex vector space $L_x$, and a Hermitian product $h_x$ on $L_x$. $L$ is a $d+1$-dimensional manifold such that $L = \bigcup_{x \in M} L_x$. A smooth section of $L$ (or $L$-valued function) is a smooth function $s : M \rightarrow L$ such that $s(x) \in L_x$. It is the generalisation of the notion of function $f : M \rightarrow \C$, but here the target space can vary with $x \in M$. Similarily, $L$-valued $k$-forms are sections of $\wedge^k T^*M \otimes L$. We denote by $\Cinf(M,L)$ the set of smooth sections of $L$, and $\Omega^k(M,L)$ the set of smooth $L$-valued $k$-forms.\\

We take $\nabla^L$ a Hermitian connexion on $(L,h)$. It is the generalisation of the exterior derivative $\dd$. The underlying idea is that the "derivative" of a $L$-valued function should be $L$-valued too. $\nabla^L : \Omega^k(M,L) \rightarrow \Omega^{k+1}(M,L)$ satisfies:
\begin{equation}
\nabla^L ( s \alpha ) = \nabla^L s \wedge \alpha + s \dd \alpha, \quad \forall s \in \Cinf(M,L), \quad \alpha \in \Omega^k(M,\C),
\end{equation}
\begin{equation}
\dd h (s_1,s_2) = h (\nabla^L s_1, s_2) + h( s_1, \nabla^L s_2), \quad \forall s_1,s_2 \in \Cinf(M,L).
\end{equation}
One can prove that $(\nabla^L)^2 : \Omega^0(M,L) \rightarrow \Omega^2(M,L)$ acts as a multiplication. There exists a real closed $2$-form $B$ on $M$ such that:
\begin{equation}
(\nabla^L)^2 s = i B s, \quad \forall s \in \Cinf(M,L).
\end{equation}

\textit{Example : The trivial line bundle.} The line bundle $L = M \times \C$, such that $L_x = \lbrace x \rbrace \times \C$ is called the trivial line bundle. We identify sections $s\in \Cinf(M,L)$ with functions $f \in \Cinf(M)$ by $s(x) = (x,f(x))$. Similarily, $L$-valued $k$-forms are identified with $\C$-valued $k$-forms, and we recover the usual differential objects on $M$. If $L$ is endowed with the Hermitian product $h_x((x,z_1),(x,z_2)) = z_1 \overline{z}_2$, we call $(L,h)$ the trivial Hermitian line bundle. We write $h(z_1,z_2)$ for short. Hermitian connexions on the trivial line bundle are given by $\nabla_{\alpha} = \dd + i \alpha$ where $\alpha \in \Omega^1(M, \R)$ and $\dd$ is the exterior derivative. The curvature of $\nabla_{\alpha}$ is $\nabla_{\alpha}^2 = i \dd \alpha$, as shown by the easy but enlightening calculation:
\begin{equation}
\begin{matrix*}[l]
\nabla_{\alpha}^2 f &= (\dd  + i  \alpha )(\dd f  + i f \alpha) = \dd^2 f + i \alpha \wedge \dd f + i \dd (f \alpha) + i f \alpha \wedge \alpha \\
&= i \alpha \wedge \dd f + i \dd f \wedge \alpha + i f \dd \alpha = i f \dd \alpha.
\end{matrix*}
\end{equation}

Let us describe now the Bochner Laplacian $\Delta^L$ associated to a Hermitian connexion $\nabla^L$ on a Hermitian complex line bundle $(L,h)$. First note that the spaces $\Cinf(M,L) = \Omega^0(M,L)$ and $\Omega^1(M,L)$ are endowed with $\Ld$-norms. The norm of a section $s \in \Cinf(M,L)$ is:
\begin{equation}
\Vert s \Vert^2 = \int_M h_x(s(x),s(x)) \dd \nu_g(x),
\end{equation}
where $\dd \nu_g$ denotes the volume form of the oriented Riemannian manifold $(M,g)$. We denote by $\Ld(M,L)$ the completion of $\Cinf(M,L)$ for this norm. The definition of the norm of a $L$-valued $1$-form $\alpha$ is a little more involved. First, using a partition of unity, it is enough to define it for $\alpha \in \Omega^1(U,L)$ where $U$ is a small open subset of $M$. If $U$ is small enough, there exists a section $e \in \Cinf(U,L)$ such that $h_x(e(x),e(x)) = 1$. Then for any $\alpha \in \Omega^1(U,L)$, there exists a unique $X \in TM$ such that $\alpha_x( \bullet) = g_x(X_x, \bullet) e_x$ (we identify $1$-forms with tangent vectors using the metric $g$). We define:
\begin{equation}
\Vert \alpha \Vert^2 = \int_M g_x(X_x,X_x) \dd \nu_g(x).
\end{equation}
The completion of $\Omega^1(M,L)$ for this norm is denoted by $\Ld \Omega^1(M,L)$: it is the space of square-integrable $L$-valued $1$-forms. These norms are associated with scalar products, denoted by brackets $\langle .,. \rangle$.\\

The formal adjoint of $\nabla^L : \Omega^0(M,L) \rightarrow \Omega^1(M,L)$ for these scalar products is denoted by $(\nabla^L)^* : \Omega^1(M,L) \rightarrow \Omega^0(M,L)$. The Bochner Laplacian $\Delta^L$ is the self-adjoint extension of $(\nabla^L)^* \nabla^L$. It is the operator associated with the quadratic form:
\begin{equation}
Q(s_1,s_2) = \langle \nabla^L s_1, \nabla^L s_2 \rangle.
\end{equation}
We denote by $\Dom ( \Delta^L)$ its domain. $\Cinf(M,L)$ is a dense subspace of $\Dom( \Delta^L)$ and:
\begin{equation}
\langle \Delta^L s_1, s_2 \rangle = \langle \nabla^L s_1, \nabla^L s_2 \rangle, \quad \forall s_1, s_2 \in \Dom( \Delta^L).
\end{equation}
Since $M$ is compact, one can prove that $\Delta^L$ has compact resolvent, and we denote by
\begin{equation}
\lambda_1(\Delta^L) \leq \lambda_2 ( \Delta^L) \leq ...
\end{equation}
the non-decreasing sequence of its eigenvalues. We will use the following notation for the Weil counting function:
$$N(\Delta^{L}, \lambda) := \sharp \lbrace j; \lambda_j (\Delta^{L}) \leq \lambda \rbrace.$$

In this paper, we are interested in the semiclassical limit, i.e. the high curvature limit "$B \rightarrow + \infty$". We can increase the curvature $B$ using tensor products of $L$. For any $p \in \N$, we denote by $L^p = L \otimes ... \otimes L$ the $p$-th tensor power of $L$. $L^p$ is still a complex line bundle of $M$, with $L^p_x = L_x \otimes ... \otimes L_x$. It is endowed with the Hermitian product $h^p_x ( s_1 \otimes ... \otimes s_p, s_1 \otimes ... \otimes s_p) = \Pi_{i=1}^p h_x(s_i,s_i)$. The connexion $\nabla^L$ induces a Hermitian connexion $\nabla^{L^p}$ on $L^p$ by the formula:
$$\nabla^{L^p} ( s_1 \otimes ... \otimes s_p) = (\nabla^L s_1) \otimes ... \otimes s_p + ... + s_1 \otimes ... \otimes (\nabla^L s_p).$$
The curvature of $\nabla^{L^p}$ is 
\begin{equation}
(\nabla^{L^p})^2 = i p B.
\end{equation}
Hence, the high curvature limit is $p \rightarrow + \infty$. We want to invastigate the behaviour of $\lambda_j(\Delta^{L^p})$ and the corresponding eigensections in the limit $p \rightarrow + \infty$.

\subsection{Main results}

One can measure the "intensity" of the curvature $B$ (the magnetic field) in the following way. We denote by $\B_x : T_xM \rightarrow T_xM$ the linear operator defined by
\begin{equation}
g_x(\B_x U, V) = B_x(U,V), \quad \forall U, V \in T_xM.
\end{equation}
$\B_x$ is real and skew-symmetric with respect to $g_x$, and thus its eigenvalues lie on the imaginary axis and are symmetric with respect to the origin. We denote its non-zero eigenvalues by:
\begin{equation}
\pm i \beta_1(x), \cdots , \pm i \beta_s(x),
\end{equation}
with $\beta_j(x) >0$. Hence, the rank of $\B_x$ is $2s$ and might depend on $x$, but we will soon assume that $s$ is constant, at least locally. The magnetic intensity is the function $b : M \rightarrow \R_+$ defined by:
\begin{equation}
b(x) = \sum_{j=1}^{s(x)} \beta_j(x).
\end{equation}
This function is continuous on $M$, but not smooth in general. However, note that it is smooth on a neighborhood of any point $x_0$ where the $(\beta_j(x_0))_{1 \leq j \leq s}$ are simple (if $s$ is locally constant near $x_0$).\\

One of the purposes of this article is to show that the eigensections of $\Delta^{L^p}$ are localized near the minimum points of $b$, and to deduce that the low-lying spectrum of $\Delta^{L^p}$ is given by magnetic Laplacians on neighborhoods of the minimal points of $b$.\\

We will do the following assumptions.
\begin{assumption*}
\begin{enumerate}
\item[(A1)] The minimal value of $b$ is only reached at non degenerate points $x_1, \cdots, x_N \in M$. We denote by $b_0 = b(x_j) = \min_{x \in M} b$. \\
\item[(A2)] The rank of $\B$ is constant on small neighborhoods $U_1, \cdots , U_N$ of $x_1, \cdots,  x_N$. We denote by $2 s_j$ the rank of $\B_{x_j}$.\\
\item[(A3)] We assume $b_0 > 0$, which is equivalent to say that $s_j > 0$ for any $j$.
\end{enumerate}
\end{assumption*}

As noticed in several papers, one can prove using Agmon-like estimates that the eigensections of $\Delta^{L^p}$ associated to low-lying eigenvalues are exponentially localized near $\lbrace x \in M, \quad b(x) = b_0 \rbrace,$ in the limit $p \rightarrow + \infty$. Now let us present the local model operators on $U_j$.\\

Recall that the $2$-form $B$ is closed: $\dd B = 0$. Hence, if the open sets $U_j$ are small enough, $B$ is exact on $U_j$: there exists $A_j \in \Omega^1(U_j)$ such that $B = \dd A_j$ on $U_j$. We denote by $\Lpj$ the Dirichlet realization of $(d + i p A_j)^* (d + i p A_j)$ on $\Ld(U_j)$. It is the self-adjoint operator associated to the following sesquilinear form on $\Cinf_0(U_j)$:
\begin{equation}
Q_j(u,v) = \int_M (\dd u + i p A_j u) \overline{(\dd v + i p A_j v)} \dd \nu_g.
\end{equation}

We prove the following Theorem.

\begin{theorem}\label{MainTheorem}
Let $\alpha \in (0,1/2)$. Under assumptions $(A1)$ and $(A3)$, if $\eta, \varepsilon >0$ are small enough, then:
\begin{equation}
\lambda_k(\Delta^{L^p}) = \lambda_k \left( \mathcal{L}_p^{(1)} \oplus ... \oplus \mathcal{L}_p^{(N)} \right) + \grandO(exp(-\varepsilon p^{\alpha})),
\end{equation}
uniformly with respect to $k \in [1,K_p]$, where 
$$K_p = \min \left( N(\Delta^{L^p}, (b_0 + \eta)p) , N(\mathcal{L}_p^{(1)} \oplus ... \oplus \mathcal{L}_p^{(N)} , (b_0 + \eta)p) \right),$$
and $N(\mathcal{A}, \lambda)$ denotes the number of eigenvalues of an operator $\mathcal{A}$ below $\lambda$, counted with multiplicities.
\end{theorem}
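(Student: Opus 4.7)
The plan is a standard two-step Agmon-plus-min-max comparison. First, I would establish exponential concentration near the wells, both for eigensections of $\Delta^{L^p}$ and for eigenfunctions of each $\mathcal{L}_p^{(j)}$, in the energy window $[0,(b_0+\eta)p]$. Then, using a partition of unity subordinate to the $U_j$ and their complement, I would transfer low-energy test functions between the two operators, with an IMS bookkeeping of the commutator errors. The min-max principle applied in both directions yields the stated equality.

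For the Agmon estimate, the key a priori input is the pointwise lower bound
\begin{equation*}
\langle \Delta^{L^p}\psi,\psi\rangle \geq p\int_M b(x)\,|\psi|_h^2\,\dd\nu_g - C\|\psi\|^2,
\end{equation*}
a Helffer--Mohamed type inequality obtained by a partition-of-unity reduction to local model harmonic oscillators on $\R^d$. Combined with the eigenvalue bound $\lambda\leq(b_0+\eta)p$ and the non-degeneracy (A1), conjugation of $\Delta^{L^p}$ by a Lipschitz weight $e^{\Phi_p}$ with $\Phi_p=\varepsilon p^{\alpha}\phi$, where $\phi$ is a suitable distance to $\{x_1,\dots,x_N\}$, yields
\begin{equation*}
\int_M \bigl( p(b(x)-b_0-\eta) - \varepsilon^{2} p^{2\alpha}|\nabla\phi|^2 - C \bigr)\,|e^{\Phi_p}\psi|_h^2\,\dd\nu_g \leq 0.
\end{equation*}
The sub-critical choice $\alpha<1/2$ is exactly what makes the centrifugal term dominated by $p(b-b_0-\eta)$ in the region where $b-b_0$ is bounded away from zero; the transition region near each well is absorbed thanks to the quadratic lower bound on $b-b_0$ given by (A1). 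The conclusion is $\|e^{\Phi_p}\psi\|\leq C\|\psi\|$, i.e.\ decay like $e^{-\varepsilon p^{\alpha}}$ outside any neighborhood of the wells. The same analysis, with Dirichlet boundary conditions, gives the analogous estimate for eigenfunctions of $\mathcal{L}_p^{(j)}$.

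For the min-max comparison, I pick cutoffs $\chi_0^{2}+\sum_j \chi_j^{2}=1$ with $\chi_j$ supported in $U_j$ and equal to $1$ near $x_j$, together with a local unit section $e_j$ on $U_j$ chosen so that $\nabla^{L^p}(f\cdot e_j^{\otimes p}) = (\dd+ipA_j)f\otimes e_j^{\otimes p}$. In the upper direction, given the first $K_p$ eigenfunctions $u_k^{(j)}$ of $\mathcal{L}_p^{(j)}$ in the window, I lift them to almost-orthonormal trial sections $\tilde u_k^{(j)}=\chi_j u_k^{(j)}\otimes e_j^{\otimes p}$ of $L^p$. A direct computation gives $\Delta^{L^p}\tilde u_k^{(j)} = \chi_j(\mathcal{L}_p^{(j)}u_k^{(j)})\otimes e_j^{\otimes p} + [\Delta^{L^p},\chi_j]u_k^{(j)}\otimes e_j^{\otimes p}$; the commutator is first order with coefficients supported where $\chi_j\in(0,1)$, hence where $u_k^{(j)}$ decays like $e^{-\varepsilon p^{\alpha}}$ by the Agmon estimate for $\mathcal{L}_p^{(j)}$, and the overlaps between distinct $\tilde u_k^{(j)}$'s are of the same order. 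Min-max on the resulting $K_p$-dimensional test space yields $\lambda_k(\Delta^{L^p})\leq \lambda_k\bigl(\bigoplus_j \mathcal{L}_p^{(j)}\bigr)+\grandO(e^{-\varepsilon p^{\alpha}})$. In the reverse direction, starting from eigensections $\psi_k$ of $\Delta^{L^p}$, each $\chi_j\psi_k$ is compactly supported in $U_j$ and, after trivialization, becomes a test function for $\mathcal{L}_p^{(j)}$; the IMS identity converts $\langle\Delta^{L^p}\psi_k,\psi_k\rangle$ into $\sum_j Q_j(\chi_j\psi_k,\chi_j\psi_k)$ up to the Agmon-small pieces $\|\chi_0\psi_k\|$ and $\||\nabla\chi_j|\psi_k\|^2$, again yielding the reverse inequality.

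The main obstacle is to make the Agmon estimate uniform over the whole energy window of width $\eta p$ above $b_0 p$, with rate $e^{-\varepsilon p^{\alpha}}$ for every $\alpha<1/2$; the sub-critical exponent is precisely what allows a single Lipschitz weight to dominate both the potential defect $p(b-b_0-\eta)$ and the quadratic centrifugal error simultaneously. A secondary technical point is the gauge choice on each $U_j$, which must be adjusted to a local unit frame so that $\nabla^{L^p}$ reads exactly as $\dd+ipA_j$ under the trivialization; this is possible because $B$ is exact on the contractible open set $U_j$.
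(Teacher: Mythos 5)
Your proposal is correct and follows essentially the same route as the paper: a Helffer--Mohamed-type lower bound for the quadratic form obtained via a partition of unity reducing to local magnetic Laplacians, Agmon decay with a sub-critical Lipschitz weight $e^{p^{\alpha}\phi}$, and a two-sided min-max comparison obtained by transporting cutoff quasimodes through the local trivialization $\nabla^{L^p}(u\,e_j)=[(\dd+ipA_j)u]\,e_j$. The paper chooses the Agmon weight $\phi$ to vanish on the sublevel set $K_{\eta}=\{b\leq b_0+2\eta\}$, so the transition-region contribution is trivially $\mathcal{O}(p\|\psi\|^2)$ rather than being absorbed via the non-degeneracy hypothesis (A1) as you suggest; otherwise the two arguments coincide.
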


As a corollary, we can deduce spectral asymptotics for $\Delta^{L^p}$ from already-known results for $\Lpj$. Let us recall some of these results here.\\

\subsubsection{The full-rank case}
 
Under the assumptions $(A1)-(A2)-(A3)$, we fix a $j \in \lbrace 1 , \cdots , N\rbrace$, and we denote by $B_j = \dd A_j$. Hence, $B_j$ is just the restriction of $B$ to the small open set $U_j$, where it admits a primitive $A_j$. $\Lhj$ is the magnetic Laplacian with Dirichlet boundary conditions on $U_j$, with magnetic field $B_j$. We first focus on the full-rank case, when the rank of $B_j$ is maximal: $2s_j = d$. We define $r_j \in \N$ by the condition
\begin{equation}\label{resonances1}
\forall n \in \Z^{s_j}, \quad 0 < \sum_{\ell = 1}^{s_j} \vert n_{\ell} \vert < r_j \Rightarrow \sum_{\ell=1}^{s_j} n_{\ell} \beta_{\ell}(x_j) \neq 0.
\end{equation}
Note that, if the $\beta_{\ell}(x_j)$ are pairwise distinct, we can choose $r_j \geq 3$. Moreover, if the open set $U_j$ is small enough we have, for all $x \in U_j$ and $n \in \Z^{s_j}$,
\begin{equation}
0 < \sum_{\ell = 1}^{s_j} \vert n _{\ell} \vert < r_j \implies \sum_{\ell=1}^{s_j} n_{\ell} \beta_{\ell}(x) \neq 0.
\end{equation}
The following Theorem was proved in \cite{MagBNF}.

\begin{theorem}
We assume $(A1)-(A2)-(A3)$ with $2s_j =d$ and $r \geq 3$ in \eqref{resonances1}. Let $\eta, \varepsilon >0$ small enough. Then there exists a symplectomorphism $\psi : U_j \rightarrow T^* \R^{d/2}$ such that:
\begin{equation}
\frac{1}{p^2}\lambda_k( \Lpj ) = \lambda_k \left( \bigoplus_{n \in \N^d} \mathcal{N}_p^{[j,n]} \right) + \grandO (p^{-r_j / 2 + \varepsilon}),
\end{equation}
uniformly with respect to $k \in [1, \tilde{K}_p]$, where $\mathcal{N}_p^{[j,n]}$ is a pseudo-differential operator with principal symbol:
$$\sigma ( \mathcal{N}_p^{[j,n]} ) = \frac{1}{p} \sum_{\ell = 1}^{s_j}  (2n_{\ell} + 1) \beta_{\ell} \circ \psi^{-1}(x,\xi),$$
and
$$\tilde{K}_p = \min \left( N(\Lpj , (b_0 + \eta)p)  , N( \oplus_n \mathcal{N}_p^{[j,n]} , (b_0 + \eta)p^{-1} ) \right).$$
\end{theorem}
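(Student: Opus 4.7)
The plan is to perform a quantum Birkhoff normal form procedure near each magnetic well $x_j$, following the strategy of \cite{MagBNF}. Setting $h = 1/p$, the operator $h^2 \Lpj$ has principal symbol $H_j(x, \xi) = |\xi - A_j(x)|_{g_x}^2$ on $T^*U_j$; its characteristic set is the zero section, and the effective Hamiltonian driving the low-lying spectrum is the magnetic intensity $b$, which by $(A1)$ has a non-degenerate minimum at $x_j$ with value $b_0$.

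First I would invoke Agmon-type localization to confine eigenfunctions with eigenvalue below $(b_0 + \eta) p$ to a small neighborhood of $x_j$, and then perform a semiclassical dilation $x \mapsto \sqrt{h}(x - x_j)$ bringing the well at scale $\grandO(1)$. Because $\B_{x_j}$ has full rank $d = 2 s_j$, an equivariant Darboux theorem produces a symplectomorphism $\psi : U_j \to T^*\R^{d/2}$ diagonalizing the symplectic form $B_j$ at $x_j$; in the resulting coordinates the quadratic part of the transformed symbol reads $\sum_{\ell=1}^{s_j} \beta_\ell(x_j) (\xi_\ell^2 + y_\ell^2)$, a sum of harmonic oscillators with cyclotron frequencies $\beta_\ell(x_j)$.

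The heart of the proof is a quantum normal form iteration in the semiclassical Weyl calculus: conjugate $h^2 \Lpj$ by successive Fourier integral operators of the form $e^{i \mathrm{Op}^W(\chi)/h}$ in order to eliminate, order by order in $h^{1/2}$, the terms of total degree below $r_j$ that fail to Poisson-commute with the harmonic actions $I_\ell(y, \xi) = \xi_\ell^2 + y_\ell^2$. The cohomological equation is solved on each monomial $\prod_\ell (\xi_\ell + i y_\ell)^{n_\ell^+}(\xi_\ell - i y_\ell)^{n_\ell^-}$: the obstruction factor $\sum_\ell (n_\ell^+ - n_\ell^-) \beta_\ell(x_j)$ is nonzero whenever $\sum_\ell (n_\ell^+ + n_\ell^-) < r_j$ by the non-resonance hypothesis \eqref{resonances1}. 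After $r_j$ iterations, $h^2 \Lpj$ becomes unitarily equivalent, modulo an error $\grandO(h^{r_j/2 - \varepsilon})$, to an operator commuting with each $\mathrm{Op}^W(I_\ell)$. Decomposing $\Ld(\R^{d/2})$ into the joint eigenspaces of $\mathrm{Op}^W(I_1), \ldots, \mathrm{Op}^W(I_{s_j})$ (with eigenvalues $h(2 n_\ell + 1)$) then yields the direct sum structure $\bigoplus_{n \in \N^{s_j}} \mathcal{N}_p^{[j,n]}$, whose $n$-th block has principal symbol $\frac{1}{p}\sum_\ell (2 n_\ell + 1)\, \beta_\ell \circ \psi^{-1}$ as announced.

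The main obstacle is to make this construction uniform for $k \in [1, \tilde{K}_p]$, with an error that does not deteriorate as the eigenvalue approaches $(b_0 + \eta) p$. This requires careful control of the pseudo-differential remainders in symbol classes adapted to the harmonic-oscillator weight $m(y, \xi) = 1 + |y|^2 + |\xi|^2$, together with microlocalization estimates showing that eigenfunctions below $(b_0 + \eta) p$ are essentially supported in a compact phase-space region where the normal form is valid. Once this uniform control is secured, the final eigenvalue comparison between $\Lpj$ and the direct sum below the threshold follows from the min-max principle.
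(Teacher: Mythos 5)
This theorem is not proved in the paper at all: it is imported verbatim from \cite{MagBNF} (Raymond--Vu~Ngoc), and the paper only uses it as a black box together with Theorem~\ref{MainTheorem}. Your proposal is therefore an attempt to reconstruct the argument of that reference, and in broad outline it is faithful: Agmon localization near $x_j$, a Darboux-type symplectic change of coordinates adapted to $B_j$, a quantum Birkhoff normal form in the semiclassical Weyl calculus iterated up to order $r_j$ using the non-resonance condition \eqref{resonances1}, and finally a fiber-by-fiber reduction onto the joint spectral decomposition of the harmonic actions, producing the operators $\mathcal{N}_p^{[j,n]}$. The identification of the obstruction to solving the cohomological equation with $\sum_\ell (n_\ell^+ - n_\ell^-)\beta_\ell(x_j)$ and the role of \eqref{resonances1} in clearing terms of degree below $r_j$ are exactly the mechanism of \cite{MagBNF}.

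Two points deserve correction, both in the geometric setup. First, the characteristic set of $H_j(x,\xi) = |\xi - A_j(x)|_{g_x}^2$ is the graph $\Sigma_j = \{\xi = A_j(x)\}$, not the zero section; since $A_j$ is not closed ($\dd A_j = B_j \neq 0$) it cannot be gauged to zero, and the fiber translation $(x,\xi)\mapsto(x,\xi - A_j(x))$ is not a symplectomorphism of $T^*U_j$. The essential structural fact, which your sketch glosses over, is that $\Sigma_j$ is a \emph{symplectic} submanifold of $T^*U_j$ precisely because $B_j$ has full rank: the pulled-back $2$-form on $\Sigma_j \cong U_j$ is $B_j$ itself, and the symplectomorphism $\psi : (U_j, B_j) \to (T^*\R^{d/2}, \omega_0)$ in the statement is a Darboux chart for this induced structure, not an isomorphism of the original phase space. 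Second, a single spatial rescaling $x \mapsto \sqrt{h}(x-x_j)$ does not set up the normal form; the relevant dilation acts on the symplectic complement $T\Sigma_j^{\perp}$ (the cyclotron variables), while the slow variables on $\Sigma_j$ are kept at unit scale --- the splitting of phase space along $T\Sigma_j \oplus T\Sigma_j^{\perp}$, not a position-space dilation, is what makes the harmonic oscillators $I_\ell$ appear. With these adjustments your outline matches the proof in the reference, though you are right that the uniform control in $k$ up to the threshold $(b_0+\eta)p$ is the technically delicate part.
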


Hence, we have a description of the semi-excited states of $\Lpj$. In the same paper, Weyl estimates are proven for $\Lpj$. We directly deduce the following Weyl estimates for $\Delta^{L^p}$. A Similar formula was proven in \cite{Dem85} using a local approximation of the magnetic field by a constant field.

\begin{corollary}
Assume $(A_1)-(A_2)-(A_3)$, and for any $j \in \lbrace 1, \cdots , N \rbrace$ that $s_j = d/2$, and that $(\beta_{\ell}(x_j))_{1 \leq \ell \leq N}$ are pairwise distinct. Then, for $\eta >0$ small enough,
\begin{equation}
N(\Delta^{L^p}, (b_0 + \eta)p) \sim \left( \frac{p}{2 \pi } \right) ^{d/2} \sum_{n \in \N^{d/2}}\int_{b^{[n]}(x) \leq b_0 + \eta} \frac{B^{d/2}}{(d/2)!},
\end{equation}
in the limit $p \rightarrow + \infty$, where $b^{[n]}(x) = \sum_{\ell=1}^{d/2} n_{\ell} \beta_{\ell}(x)$.
\end{corollary}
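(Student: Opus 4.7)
The plan is to chain Theorem~\ref{MainTheorem} with the Weyl estimates for $\Lpj$ recalled from \cite{MagBNF}, and to transport the resulting phase-space integrals back to $M$ via the symplectomorphism $\psi$. Theorem~\ref{MainTheorem} states that $\lambda_k(\Delta^{L^p})$ and $\lambda_k(\bigoplus_{j=1}^N \mathcal{L}_p^{(j)})$ coincide up to $\grandO(\exp(-\varepsilon p^\alpha))$ in the range $k \leq K_p$, with $K_p$ the minimum of the two counting functions at threshold $(b_0+\eta)p$. Since this exponentially small error is much smaller than any conceivable eigenvalue spacing near $(b_0+\eta)p$, a sandwich argument (replacing $\eta$ by $\eta \pm \delta$ for generic small $\delta>0$) yields
\begin{equation*}
N(\Delta^{L^p}, (b_0+\eta)p) \sim \sum_{j=1}^N N(\mathcal{L}_p^{(j)}, (b_0+\eta)p),
\end{equation*}
by additivity of the counting function on direct sums.

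Applying the Theorem of \cite{MagBNF} to each $\mathcal{L}_p^{(j)}$, the rescaled eigenvalues $\lambda_k(\mathcal{L}_p^{(j)})/p^2$ match those of $\bigoplus_{n \in \N^{d/2}} \mathcal{N}_p^{[j,n]}$ up to $\grandO(p^{-r_j/2+\varepsilon})$, and the same sandwich trick reduces the counting function to $\sum_n N(\mathcal{N}_p^{[j,n]}, (b_0+\eta)/p)$. For $\eta$ small only finitely many $n$ contribute, since the symbol $\sigma(\mathcal{N}_p^{[j,n]}) = p^{-1}\sum_\ell (2n_\ell + 1)\beta_\ell \circ \psi^{-1}$ grows linearly in $|n|$ with strictly positive slope under the full-rank assumption $s_j = d/2$, so the relevant sublevel sets are empty for $|n|$ large. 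Each $\mathcal{N}_p^{[j,n]}$ is pseudo-differential on $T^*\R^{d/2}$ with semiclassical parameter $h=1/p$, so the standard semiclassical Weyl law gives
\begin{equation*}
N\!\left(\mathcal{N}_p^{[j,n]}, (b_0+\eta)/p\right) \sim \left(\frac{p}{2\pi}\right)^{d/2} \int_{\sigma(\mathcal{N}_p^{[j,n]}) \leq (b_0+\eta)/p} \frac{(d\xi \wedge dx)^{d/2}}{(d/2)!}.
\end{equation*}
Since $\psi$ is a symplectomorphism, the Liouville form pulls back to $B^{d/2}/(d/2)!$ on $U_j$; for $\eta$ small the relevant sublevel sets lie in $\bigcup_j U_j$, so summing over $j$ re-assembles the pieces into integrals over $M$ and yields the announced formula (with $b^{[n]}$ understood as the level-$n$ harmonic-oscillator shift of $b$ appearing in the symbol).

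The main obstacle is the double sandwich step: passing from an asymptotic equivalence of individual eigenvalues (with exponentially small or polynomially small error) to the equivalence of counting functions. This requires checking that shifting the threshold $(b_0+\eta)p$ by an infinitesimal $\delta p$ only affects $o(p^{d/2})$ eigenvalues, which follows from the continuity in $\eta$ of the phase-space volume for almost every small $\eta$, and from verifying that the main term is indeed of order $p^{d/2}$, so that the index $K_p$ in Theorem~\ref{MainTheorem} does grow fast enough to cover the entire low-lying spectrum being counted.
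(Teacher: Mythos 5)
Your overall structure matches what the paper intends: Theorem~\ref{MainTheorem} transfers the eigenvalue problem to the direct sum $\bigoplus_j \Lpj$, and the Weyl law for the magnetic Laplacians $\Lpj$ then gives the stated asymptotics; the paper simply cites that Weyl law from \cite{MagBNF}, whereas you re-derive it from the Birkhoff normal form theorem, the semiclassical Weyl law for the effective operators $\mathcal{N}_p^{[j,n]}$, and the symplectic change of variables $\psi$ (which correctly turns the Liouville volume on $T^*\R^{d/2}$ into $B^{d/2}/(d/2)!$ on $U_j$). That is a legitimate alternative route, essentially unpacking what the cited reference proves, and your observation that only finitely many $n$ contribute is right. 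One remark worth flagging: the heuristic ``the error is much smaller than any conceivable eigenvalue spacing'' is not the correct mechanism and is in fact wrong at the second stage, since the BNF error $\grandO(p^{-r_j/2+\varepsilon})$ (with $r_j$ possibly as small as $3$) can dominate the mean rescaled spacing $\sim p^{-d/2}$ once $d\ge 4$; what actually saves the argument is exactly the continuity-in-$\eta$ of the limiting phase-space volume that you invoke in the sandwich. For that sandwich to be applied cleanly one also needs, as you note, that the one-sided comparison lemmas underlying Theorem~\ref{MainTheorem} hold up to the larger of the two counting functions (which their proofs via min-max do give), so that $K_p$ indeed covers the whole range being counted; and the Weyl main term being of order $p^{d/2}$ guarantees that the thresholds $b_0+\eta\pm\delta$ bracket $N(\Delta^{L^p},(b_0+\eta)p)$ with relative error going to zero as $\delta\to 0$. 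Finally, your reading of $b^{[n]}$ as the harmonic-oscillator shift $\sum_\ell (2n_\ell+1)\beta_\ell$ is the one consistent with the symbol of $\mathcal{N}_p^{[j,n]}$ and with the requirement that $\{b^{[n]}\le b_0+\eta\}$ be nontrivial only for finitely many $n$; the formula as printed in the corollary would otherwise be vacuous for $n=0$.
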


Finally, we also deduce asymptotic expansions of the first eigenvalues.

\begin{corollary}
Assume $(A1)-(A2)-(A3)$, and for any $j \in \lbrace 1, \cdots N \rbrace$ that $s_j = d/2$ and that $(\beta_{\ell}(x_j))_{1 \leq \ell \leq N}$ are pairwise distinct, and $r := \min_j r_j \geq 5$. Then, for any $k \in \N$ and $\varepsilon >0$,
$$\lambda_k(\Delta^{L^p}) = b_0 p + \sum_{i=0}^{r-5} \alpha_{i,k} p^{-i/2} + \grandO( p^{2-r/2 + \varepsilon}),$$
for some coefficients $\alpha_{i,k} \in \R$.
\end{corollary}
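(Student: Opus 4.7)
The plan is to chain Theorem \ref{MainTheorem} with the full-rank theorem from \cite{MagBNF}, and then to perform a semiclassical harmonic approximation on each of the remaining symbolic operators $\mathcal{N}_p^{[j,0]}$.

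First, I would apply Theorem \ref{MainTheorem} to replace $\lambda_k(\Delta^{L^p})$ by $\lambda_k(\mathcal{L}_p^{(1)} \oplus \cdots \oplus \mathcal{L}_p^{(N)})$ up to an error $\grandO(\exp(-\varepsilon p^{\alpha}))$, which is negligible compared to the claimed $\grandO(p^{2-r/2+\varepsilon})$. Since the $\beta_{\ell}(x_j)$ are pairwise distinct, each $r_j \geq r \geq 5$, and the full-rank theorem from \cite{MagBNF} applies to each $\Lpj$, giving
\[
\lambda_k(\Lpj) = p^2\, \lambda_k\!\Bigl( \bigoplus_{n \in \N^{d/2}} \mathcal{N}_p^{[j,n]} \Bigr) + \grandO(p^{2-r_j/2+\varepsilon}).
\]

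Next, I would argue that only the summand $n=0$ contributes to the low-lying spectrum. For $n \neq 0$, the principal symbol $\tfrac{1}{p}\sum_{\ell}(2n_{\ell}+1)\beta_{\ell}\circ \psi^{-1}$ of $\mathcal{N}_p^{[j,n]}$ is bounded below on $\psi(U_j)$ by $\tfrac{1}{p}\bigl( b_0 + 2\min_{\ell}\beta_{\ell}(x_j) \bigr)$, up to a term vanishing as $U_j$ shrinks. Since $\min_{\ell}\beta_{\ell}(x_j)>0$ by (A3), the sharp G\aa rding inequality yields $\mathcal{N}_p^{[j,n]} \geq (b_0+\delta)/p$ for some $\delta>0$ uniform in $p$ and in $n\neq 0$. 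For fixed $k$ and $p$ large this forces $\lambda_k\bigl(\oplus_n \mathcal{N}_p^{[j,n]}\bigr) = \lambda_k(\mathcal{N}_p^{[j,0]})$.

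It remains to expand $\lambda_k(\mathcal{N}_p^{[j,0]})$. After rescaling by a factor $p$, the operator $p\,\mathcal{N}_p^{[j,0]}$ is an $\hbar$-pseudo-differential operator on $T^* \R^{d/2}$ with $\hbar = 1/p$, whose principal symbol $b \circ \psi^{-1}$ has a non-degenerate minimum $b_0$ at $\psi(x_j)$ by (A1). I would then apply a second, symbolic Birkhoff normal form near this minimum up to order $r-4$ (in the spirit of \cite{Birkhoff2D}, or equivalently a semiclassical harmonic approximation combined with analytic perturbation theory) to get
\[
p\,\lambda_k(\mathcal{N}_p^{[j,0]}) = b_0 + \sum_{i=0}^{r-5} \tilde\alpha_{i,k}^{(j)}\, p^{-1-i/2} + \grandO\bigl(p^{-1-(r-4)/2+\varepsilon}\bigr),
\]
so that $p^2\,\lambda_k(\mathcal{N}_p^{[j,0]}) = b_0 p + \sum_i \tilde\alpha_{i,k}^{(j)}\, p^{-i/2} + \grandO(p^{2-r/2+\varepsilon})$. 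Taking the $k$-th smallest value among the $N$ sequences indexed by $j$ then yields the announced expansion, the coefficient $\alpha_{i,k}$ being determined lexicographically from the $\tilde\alpha_{i,k'}^{(j)}$.

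The hard part is this last step: while \cite{MagBNF} already reduces $\Lpj$ to the microlocal operators $\mathcal{N}_p^{[j,n]}$, squeezing out a full $p^{-i/2}$ expansion of the low-lying eigenvalues of $\mathcal{N}_p^{[j,0]}$ requires a second normal-form procedure adapted to a non-degenerate minimum of its principal symbol, carried out to the same order $r-4$ as the first one so that the two remainders are compatible.
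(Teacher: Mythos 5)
Your top-level route is the same as the paper's --- combine Theorem \ref{MainTheorem} with spectral results for the magnetic Laplacian $\Lpj$ from \cite{MagBNF} --- but you stop citing \cite{MagBNF} one step too early and then re-prove what it already contains. The eigenvalue expansion
$$\lambda_k(\Lpj) = b_0 p + \sum_{i=0}^{r_j-5}\alpha^{(j)}_{i,k}\,p^{-i/2} + \grandO(p^{2-r_j/2+\varepsilon})$$
is itself one of the main results of \cite{MagBNF}, not merely the intermediate normal-form reduction to $\bigoplus_n \mathcal{N}_p^{[j,n]}$. Everything you flag as ``the hard part'' --- isolating the $n=0$ block via a G\aa rding lower bound for $n\neq 0$, then performing a second Birkhoff normal form on $\mathcal{N}_p^{[j,0]}$ near the non-degenerate minimum of $b\circ\psi^{-1}$ --- is precisely how \cite{MagBNF} deduces its eigenvalue expansion from its own normal-form theorem, and the paper intends it to be cited as a black box rather than reconstructed. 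Once you cite the expansion directly, the corollary is immediate: Theorem \ref{MainTheorem} gives $\lambda_k(\Delta^{L^p}) = \lambda_k\bigl(\bigoplus_j \Lpj\bigr) + \grandO(e^{-\varepsilon p^\alpha})$, the exponentially small error is $\grandO(p^{-\infty})$ and hence absorbed into $\grandO(p^{2-r/2+\varepsilon})$, and for fixed $k$ and $p$ large the $k$-th eigenvalue of the direct sum is the $k$-th smallest among the $N$ sequences of expansions, whose coefficients are sorted lexicographically as you observe. So your conclusion is right and your intermediate observations (only $n=0$ contributes, the lower bound for $n\neq 0$, the need for a second normal form to order $r-4$) are correct and do reflect what happens inside \cite{MagBNF}; but as a proof of the corollary your last step is a sketch that would need to be carried out in full, whereas the intended argument simply cites it.
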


\textit{Remark.} We also have geometric interpretations of the coefficients. First, the full expansion comes from the effective operator $\mathcal{N}_p^{[j,0]}$, which is the reduction of $\Lpj$ to the lowest energy of the Harmonic oscillator describing the classical cyclotron motion. Moreover, $\alpha_{0,k}$ is given by  an eigenvalue of an other Harmonic oscillator whose symbol is the Hessian of $b$ at $x_j$ (for some $1 \leq j \leq N$): it describes a slow drift of the classical particle arround $x_j$. If the eigenvalues of this oscillator are simple, then a Birkhoff normal form can be used to show that $\alpha_{i,k} = 0$ if $i$ is odd.

\subsubsection{The constant-rank case}

In the non-full-rank case, the kernel of $B$ (which corresponds to the directions of the field lines), has a great influence on the spectrum of $\Delta^{L^p}$. Fix $1 \leq j \leq N$. If the rank of $B_j$ is constant, equal to $2s_j$, then its kernel as dimension $k_j = d - 2s_j$. The partial Hessian of $b$ at $x_j$, in the directions of the Kernel of $B_j$, is non-degenerate. we denote by
\begin{equation}
\nu_{j,1}^2, \cdots, \nu_{j,k_j}^2
\end{equation}
its eigenvalues. For simplicity, we will make the following non-resonance assumptions (however, we can deal with resonances using a resonance order $r$ as in the full-rank case).

\begin{assumption*}
(A4) For every $j$, $(\beta_{\ell}(x_j))_{1 \leq \ell \leq s_j}$ are non-resonnant in the following sense:
$$ \forall n \in \Z^{s_j}, \quad n \neq 0 \implies  \sum_{\ell = 1}^{s_j} n_{\ell} \beta_{\ell}(x_j) \neq 0.$$
(A5) For every $j$ such that $k_j >0$, $(\nu_{j,\ell})_{1 \leq \ell \leq k_j}$ are non resonnant in the following sense:
$$ \forall n \in \Z^{k_j}, \quad n \neq 0 \implies  \sum_{\ell = 1}^{k_j} n_{\ell} \nu_{j,\ell} \neq 0.$$
\end{assumption*}

Applying the results of \cite{Morin-PresymplecticBNF} to get spectral asymptotics for $\Lhj$, we deduce from Theorem \ref{MainTheorem} the following corollary.

\begin{corollary}
Assume $(A1)-(A2)-(A3)-(A4)-(A5)$, and let $n \in \N$. Then $\lambda_n(\Delta^{L^p})$ admits a full asymptotic expansion in powers of $p^{-1/2}$:
$$\lambda_n(\Delta^{L^p}) = b_0 p + \kappa p^{1/2} + \sum_{i \geq 0} \alpha_{i,n} p^{-i/2} + \grandO(p^{-\infty}).$$
Moreover:
\begin{enumerate}
\item[•] If there is at least one $j$ such that $k_j = 0$, then $\kappa = 0$.\\
\item[•] If $\forall j \in \lbrace 1 , \cdots , N \rbrace, k_j >0$, then $\kappa = \min_{j=1, \cdots, N} \sum_{\ell=1}^{k_j} \nu_{j,\ell}.$
\end{enumerate}
\end{corollary}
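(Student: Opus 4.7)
The plan is to chain Theorem~\ref{MainTheorem} with the spectral results of \cite{Morin-PresymplecticBNF}. First I would invoke Theorem~\ref{MainTheorem} to reduce the problem to the direct sum $\mathcal{L}_p^{(1)} \oplus \cdots \oplus \mathcal{L}_p^{(N)}$ modulo an error $\grandO(\exp(-\varepsilon p^{\alpha})) = \grandO(p^{-\infty})$, which is harmless for a full asymptotic expansion in powers of $p^{-1/2}$. It then suffices to establish the full expansion for the eigenvalues of this direct sum, whose spectrum is the reordered union of the spectra of the individual blocks.

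For each $j \in \{1,\ldots,N\}$ I would apply the presymplectic Birkhoff normal form of \cite{Morin-PresymplecticBNF} to $\mathcal{L}_p^{(j)}$. Assumptions (A1)--(A3) ensure that $x_j$ is a non-degenerate minimum of $b$ with $b(x_j) = b_0 > 0$ and that $\B$ has constant rank $2 s_j$ on $U_j$; the non-resonance hypotheses (A4)--(A5) are precisely what one needs to push the normal form to arbitrary order, both for the cyclotron frequencies $\beta_{\ell}(x_j)$ and for the transverse oscillator frequencies $\nu_{j,\ell}$. This would produce, for every fixed $n \in \N$, a full asymptotic expansion
$$\lambda_n(\mathcal{L}_p^{(j)}) = b_0 p + \kappa_j p^{1/2} + \sum_{i \geq 0} \alpha_{i,n}^{(j)} p^{-i/2} + \grandO(p^{-\infty}),$$
where $\kappa_j = \sum_{\ell=1}^{k_j} \nu_{j,\ell}$ when $k_j > 0$ and $\kappa_j = 0$ when $k_j = 0$. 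The $p^{1/2}$-coefficient arises as the ground-state energy of the transverse harmonic oscillator associated with the partial Hessian of $b$ at $x_j$ in the directions of $\ker \B_{x_j}$.

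The last step is to reassemble these individual expansions into one for the direct sum by sorting. Set $\kappa := \min_j \kappa_j$. For $p$ large, the differences $(\kappa_j - \kappa)p^{1/2}$ create gaps of order $p^{1/2}$ between the contributions of wells with different $\kappa_j$, so that for any fixed $n$ the $n$-th eigenvalue of the full direct sum coincides, for $p$ large enough, with the $n$-th eigenvalue of the restricted sum $\bigoplus_{\kappa_j = \kappa} \mathcal{L}_p^{(j)}$, to which the individual expansions combine directly. This produces the full expansion claimed in the corollary. The two cases for $\kappa$ then follow immediately: if some $k_{j_{\star}} = 0$ then $\kappa_{j_{\star}} = 0$, and since (A5) implies $\nu_{j,\ell} > 0$ (as $\nu_{j,\ell}^2$ is an eigenvalue of a positive-definite Hessian) we have $\kappa_j > 0$ whenever $k_j > 0$, so $\kappa = 0$; while if $k_j > 0$ for all $j$, then $\kappa = \min_j \sum_{\ell=1}^{k_j} \nu_{j,\ell}$.

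\textbf{Main obstacle.} The technical heart is hidden in \cite{Morin-PresymplecticBNF}: the real difficulty is to build a presymplectic Birkhoff normal form valid to all orders in the constant-rank (non full-rank) case, simultaneously handling the fast cyclotron scale controlled by (A4) and the slow transverse scale controlled by (A5). From the viewpoint of the present corollary this work is already done, so the only care I would need here is to check that the exponential remainder of Theorem~\ref{MainTheorem} is indeed absorbed in $\grandO(p^{-\infty})$ and that the sorting argument identifying the $n$-th eigenvalue of the direct sum with that of a sub-sum is legitimate, both of which are straightforward.
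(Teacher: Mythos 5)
Your proposal is correct and follows exactly the route the paper intends: reduce to $\bigoplus_j \Lpj$ via Theorem~\ref{MainTheorem} (with exponentially small error absorbed into $\grandO(p^{-\infty})$), apply the constant-rank Birkhoff normal form of \cite{Morin-PresymplecticBNF} to each block, and sort the resulting expansions across wells, using that the $\kappa_j p^{1/2}$ terms separate wells with distinct $\kappa_j$ for $p$ large. The paper itself only cites \cite{Morin-PresymplecticBNF} and Theorem~\ref{MainTheorem} without spelling out details, and the points you add — that $K_p \to \infty$ so every fixed $n$ is eventually in range, that $\nu_{j,\ell}>0$ because the partial Hessian at a non-degenerate minimum is positive definite, and the merge-and-sort step — are precisely the glue needed.
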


\section{Some Remarks}

\subsection{The Bochner Laplacian and the magnetic Laplacian are locally the same}

If $U$ is any open subset of $M$ such that there exists a non-vanishing section $e \in \Cinf(U,L)$, then any $s \in \Cinf(U,L)$ can be written $s = u e$ for some $u \in \Cinf(M)$. Hence, $$\nabla s = (\nabla e) u  + e (\dd u)=  e [(\dd + i A)u],$$
with $\nabla e = e iA$. Moreover, 
$$
\begin{matrix*}[l]
\nabla^2 s &= \nabla e \wedge [(\dd + iA)u] + e \dd [(\dd + i A)u] \\
 &= e (iA \wedge \dd u) + e (iA \wedge iA) u + e \dd^2 u + i e u \dd A + e \dd u \wedge iA\\
 &= i e u \dd A = (i \dd A) s,
\end{matrix*}
$$
and thus $B = \dd A$. Hence $\nabla$ acts locally as $\dd + i A$, and $\Delta^L$ as the magnetic Laplacian $(\dd + i A)^* (\dd + i A)$. This is the core of Theorem \ref{MainTheorem}.

\subsection{On the quantization of a magnetic field}

If we are given a closed $2$-form $B$ (the magnetic field), the quantization question constist in finding a quantum operator associated to $B$. If $B$ is exact, this question is answered by the semiclassical magnetic Laplacian $(\hbar \dd + i A)^* (\hbar \dd + i A)$, with $B = \dd A$. Here, $\hbar>0$ is the semiclassical parameter (Planck's constant) and the semiclassical limit is $\hbar \rightarrow 0$.\\

If $B$ is not exact, but if there exists an Hermitian line bundle with Hermitian connexion such that $\nabla^2 = i B$, then the Bochner Laplacian $\nabla^* \nabla$ acts locally as the magnetic Laplacian and hence it is a good candidate. Moreover, we have \textit{locally} $$\Delta^{L^{p}} = (\dd + i p A)^{*}(\dd +i pA) = p^{2} (\frac{1}{p} \dd + i A)^{*}(\frac{1}{p} \dd + i A),$$ 
so that the semiclassical parameter is now $\hbar = \frac{1}{p}$ (Also notice the $p^{2}$ factor which is important for the eigenvalue asymptotics). The limit $\hbar \rightarrow 0$ is equivalent to $p \rightarrow + \infty$ exept that the semiclassical parameter becomes discrete ($p \in \N$).\\

A new question arises : When does such an Hermitian line bundle exists ? Weil's Theorem states that it exists if and only if $B$ satisfies the prequantization condition:
\begin{equation}
[B] \in 2\pi \Z,
\end{equation}
where $[B]$ denotes the cohomology class of $B$. This condition also enlightens the discreteness of the semiclassical parameter. Indeed, if one wants to quantize the magnetic field $\frac{1}{\hbar}B$, then one must have $\left[ \frac{1}{\hbar} B \right] \in 2 \pi \Z$, and thus $\frac{1}{p} \in \Z$, unless $[B]=0$ which means that $B$ is exact (and thus we can use the magnetic Laplacian !).

\section{Proof of Theorem \ref{MainTheorem}}

\subsection{Agmon-like estimates}

In this section, we prove exponential decay on the eigensections of $\Delta^{L^p}$, away from the set $\lbrace x_1, \cdots x_N \rbrace$. We need the following Lemma.

\begin{lemma}\label{Lemma01-minoration}
There exist $p_0>0$ and $C_0>0$ such that, for $p \geq p_0$ and $s \in \Cinf(M,L)$,
$$(1+ \frac{C_0}{p^{1/4}}) \Vert \nabla^{L^p} s \Vert^2 \geq p \int_M (b(x) - \frac{C_0}{p^{1/4}} ) \vert s(x) \vert^2 \dd x.$$
\end{lemma}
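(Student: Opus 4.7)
The plan is to cut $M$ into small patches of scale $r = p^{-3/8}$, trivialize $L^p$ on each (so that $\nabla^{L^p}$ becomes $\dd + ipA_j$ with $\dd A_j = B$, as recalled in Section 2.1), compare $A_j$ to its linearization at the center $y_j$ of the patch, and invoke the classical lower bound $p\,b(y_j)\Vert v\Vert^2$ for the constant-curvature magnetic Laplacian on $\R^d$.

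More precisely, I would take a quadratic partition of unity $(\chi_j^2)_j$ on $M$ subordinate to a bounded-overlap cover by balls $U_j = B(y_j,r)$, with $\sum_j \vert \dd\chi_j \vert^2 \leq C r^{-2} = C p^{3/4}$, and use the IMS-type identity
$$\Vert \nabla^{L^p} s \Vert^2 = \sum_j \Vert \nabla^{L^p}(\chi_j s) \Vert^2 - \sum_j \int_M \vert \dd\chi_j \vert^2 \vert s \vert^2 \dd \nu_g$$
(whose cross-terms telescope via $\dd(\sum_j \chi_j^2) = 0$) to peel off a first loss of $C p^{3/4} \Vert s \Vert^2$. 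On each $U_j$ I pick a unit section $e_j$ of $L$ and, by multiplying $e_j$ by a suitable phase, a gauge for which $A_j(y_j) = 0$; then $A_j = A_j^{\mathrm{lin}} + A_j^{r}$ with $A_j^{\mathrm{lin}}$ the Coulomb gauge of the constant field $B(y_j)$ and $A_j^{r}(x) = \grandO(\vert x - y_j \vert^2) = \grandO(r^2)$ on $U_j$. The elementary inequality $\Vert X \Vert^2 \leq (1+\epsilon)\Vert X - Y \Vert^2 + (1 + \epsilon^{-1}) \Vert Y \Vert^2$ applied with $\epsilon = p^{-1/4}$ then yields
$$(1 + p^{-1/4}) \Vert (\dd + ipA_j)(\chi_j s) \Vert^2 \geq \Vert (\dd + ipA_j^{\mathrm{lin}})(\chi_j s) \Vert^2 - C p^{3/4} \Vert \chi_j s \Vert^2,$$
using $\Vert A_j^{r} (\chi_j s) \Vert \leq C r^2 \Vert \chi_j s \Vert$ and $(1 + p^{1/4}) p^2 r^4 \leq C p^{3/4}$.

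The remaining ingredient is the constant-field lower bound $\Vert (\dd + ipA_j^{\mathrm{lin}}) v \Vert^2 \geq p\, b(y_j)\Vert v\Vert^2$ on all of $\R^d$: after diagonalizing $\B_{y_j}$, the model operator decouples into $s_j$ two-dimensional Landau Hamiltonians of frequencies $\beta_{\ell}(y_j)$ (each with ground state energy $\beta_\ell(y_j)$, obtained from the Fock relations $a a^* = H + \beta$) plus a free Laplacian in the $d - 2 s_j$ kernel directions, of total ground energy $b(y_j)$. Combining this with $b(y_j) \geq b(x) - C r$ on $U_j$ (a loss of $p r = p^{5/8}$, absorbed in $C p^{3/4}$) and summing over $j$ using $\sum_j \chi_j^2 = 1$ gives exactly the stated inequality.

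The main obstacle is the triple balancing of the competing errors --- IMS cutoff $\sim r^{-2}$, linearization cost $\sim \epsilon^{-1} p^2 r^4$, and pointwise oscillation of $b$ of order $pr$ --- which only admits the simultaneous choice $r = p^{-3/8}$, $\epsilon = p^{-1/4}$ making all three of size $p^{3/4}$; this is precisely what forces the exponent $1/4$ in the statement. No rank constancy nor non-degeneracy of $b$ is used, so the lemma holds purely from the geometric setup.
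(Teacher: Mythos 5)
Your proof is correct, but it follows a genuinely different route than the paper's. The paper uses a \emph{fixed}, $p$-independent partition of unity $(\chi_\alpha)$ whose sole purpose is to trivialize $L^p$ on each patch, so that $\nabla^{L^p}(\chi_\alpha s)$ becomes a genuine magnetic derivative $(\dd + ipA_\alpha)u_\alpha$; it then invokes the lower bound
\begin{equation*}
\Bigl(1 + \tfrac{C_\alpha}{p^{1/4}}\Bigr)\Vert(\dd+ipA_\alpha)u_\alpha\Vert^2 \geq p\int_{U_\alpha}\Bigl(b(x) - \tfrac{C_\alpha}{p^{1/4}}\Bigr)\vert u_\alpha\vert^2\,\dd x
\end{equation*}
as a \emph{known result} for magnetic Laplacians, and the only remaining cost is the trivial $\grandO(\Vert s\Vert^2)$ IMS error from fixed cutoffs, which is absorbed into $p^{3/4}$. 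You instead take a $p$-dependent partition at scale $r = p^{-3/8}$ that plays a double role --- trivializing $L^p$ \emph{and} localizing to a nearly constant field --- and you re-derive the magnetic lower bound from the flat Landau model by Taylor expansion of $A$ (and implicitly of the metric, which also costs only $\grandO(r^2)$). In effect you have unrolled the black box the paper cites: your argument is essentially the standard proof of that magnetic estimate, spliced in a single stroke into the line-bundle reduction. It is longer, but has the merit of revealing where the exponent $1/4$ comes from (the balance $r^{-2} \sim \epsilon^{-1}p^2 r^4$ with $\epsilon = p^{-1/4}$, forcing $r = p^{-3/8}$), which the paper's proof keeps hidden in the reference. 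One point worth making explicit in your write-up: the step $b(y_j) \geq b(x) - Cr$ on $U_j$ requires $b$ to be Lipschitz, which is indeed true without any rank-constancy assumption because $b(x) = \tfrac{1}{2}\mathrm{Tr}\,\sqrt{-\B_x^2}$ is (half) the trace norm of the smoothly varying $\B_x$, and the trace norm is a norm, hence Lipschitz in the matrix entries. Both proofs are valid; the paper's is a two-line reduction to a citation, yours is self-contained.
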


\begin{proof}
Take a partition of unity $(\chi_{\alpha})_{\alpha}$ on $M$, such that $\supp \chi_{\alpha} \subset U_{\alpha}$ with $U_{\alpha}$ a small open subset of $M$, and
\begin{equation}\label{eq00-Lemma01}
1 = \sum_{\alpha} \chi_{\alpha}^2, \quad \sum_{\alpha} \vert \dd \chi_{\alpha} \vert^2 \leq C.
\end{equation}
Then, for any $s \in \Cinf(M,L)$ we have 
\begin{equation}\label{eq01-Lemma01}
\Vert \nabla^{L^p} s \Vert^2 = \sum_{\alpha} \Vert \nabla^{L^p} (\chi_{\alpha} s) \Vert^2 - \sum_{\alpha} \Vert \vert \dd \chi_{\alpha} \vert s \Vert^2,
\end{equation}
and $\chi_{\alpha} s \in \Cinf_0(U_{\alpha}, L)$. If every $U_{\alpha}$ are small enough, we can find a non-vanishing section $e_{\alpha}$ on $U_{\alpha}$ which we can use to trivialize the line bundle. Writting $\chi_{\alpha} s = u_{\alpha} e_{\alpha}$ for some $u_{\alpha} \in \Cinf_0(U_{\alpha})$, we have
\begin{equation}\label{eq02-Lemma01}
\nabla^{L^p} (\chi_{\alpha} s) = \left[  (\dd + i p A_{\alpha}) u_{\alpha} \right] e_{\alpha},
\end{equation} 
where $A_{\alpha} \in \Omega^1(U_{\alpha})$ is such that $B = \dd A_{\alpha}$. For the magnetic Laplacian $(\dd+ipA_{\alpha})^*(\dd + i p A_{\alpha})$, the desired inequality is well-known: There exist $p_{\alpha} \in \N$, $C_{\alpha} >0$ such that, for every $u_{\alpha} \in \Cinf(U_{\alpha})$, and $p \geq p_{\alpha}$:
\begin{equation}\label{eq03-Lemma01}
 (1+ \frac{C_{\alpha}}{p^{1/4}}) \Vert (\dd + ip A_{\alpha}) u_{\alpha} \Vert^2 \geq p \int_{U_{\alpha}} (b(x) - \frac{C_{\alpha}}{p^{1/4}}) \vert u_{\alpha}(x) \vert^2 \dd x.
\end{equation}
Using \eqref{eq01-Lemma01}, \eqref{eq02-Lemma01}, and \eqref{eq03-Lemma01}, we deduce that
\begin{equation}
(1+ \frac{C_0}{p^{1/4}}) \Vert \nabla^{L^p} s \Vert^2 \geq p \int_M (b(x) - \frac{C_0}{p^{1/4}}) \vert s(x) \vert^2 \dd x - (1+ \frac{C_0}{p^{1/4}}) \sum_{\alpha} \Vert \vert \dd \chi_{\alpha} \vert s \Vert^2,
\end{equation}
with $C_0 = \max_{\alpha} C_{\alpha}$, and for $p \geq \max_{\alpha} p_{\alpha}$. Finally, \eqref{eq00-Lemma01} yields
\begin{equation}
(1+ \frac{C_0}{p^{1/4}}) \sum_{\alpha} \Vert \vert \dd \chi_{\alpha} \vert s \Vert^2 \leq C (1 + \frac{C_0}{p^{1/4}} ) \Vert s \Vert^2 \leq \tilde{C} p^{3/4} \Vert s \Vert^2.
\end{equation}
Hence, up to changing $C_0$ into $C_0 + \tilde{C}$, Lemma \ref{Lemma01-minoration} is proved.
\end{proof}

Now we can use Lemma \ref{Lemma01-minoration} to prove Agmon-like decay estimates.

\begin{theorem}\label{Thm-Agmon}
Let $\alpha \in (0,1/2)$, $\eta >0$, and $K_{\eta} = \lbrace b(x) \leq b_0 + 2\eta \rbrace$. There exist $C>0$ and $p_0 >0$ such that, for all $p \geq p_0$ and all eigenpair $(\lambda, \psi)$ of $\Delta^{L^p}$ with $\lambda \leq (b_0 + \eta) p$,
$$\int_M \vert e^{\dd(x,K_{\eta})p^{\alpha}} \psi \vert^2 \dd q \leq C \Vert \psi \Vert^2.$$
\end{theorem}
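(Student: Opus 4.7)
The plan is a classical Agmon–Persson weighted estimate, using Lemma \ref{Lemma01-minoration} as the underlying operator inequality in place of the usual pointwise potential comparison. I set the Agmon weight $\Phi(x) := p^{\alpha} \dd(x, K_\eta)$, which is $p^\alpha$-Lipschitz (so $|\dd \Phi| \leq p^\alpha$ almost everywhere) and vanishes on $K_\eta$. Because $\Phi$ is unbounded I actually work with the truncations $\Phi_N := \min(\Phi, N)$, prove the estimate uniformly in $N$, and pass to $N \to +\infty$ by monotone convergence; I suppress this technicality below.

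First, I establish the Agmon–Persson identity. Using the Leibniz rule
$$\nabla^{L^p}(e^\Phi \psi) = e^\Phi \nabla^{L^p} \psi + e^\Phi (\dd \Phi) \otimes \psi$$
and computing $\langle \Delta^{L^p}\psi, e^{2\Phi}\psi\rangle = \langle \nabla^{L^p}\psi, \nabla^{L^p}(e^{2\Phi}\psi)\rangle$ by the defining quadratic form, the cross terms cancel after taking real parts, and for the eigenpair $(\lambda, \psi)$ I obtain
$$\Vert \nabla^{L^p}(e^\Phi \psi) \Vert^2 = \lambda \Vert e^\Phi \psi \Vert^2 + \Vert |\dd \Phi| \, e^\Phi \psi \Vert^2.$$

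Next, I feed this identity into Lemma \ref{Lemma01-minoration} applied to $s = e^\Phi \psi$. After rearrangement this yields
$$\int_M \Bigl[ p\bigl(b(x) - C_0 p^{-1/4}\bigr) - \bigl(1 + C_0 p^{-1/4}\bigr)\bigl(\lambda + |\dd \Phi|^2\bigr) \Bigr] \, |e^\Phi \psi|^2 \, \dd \nu_g \leq 0.$$
I then split the integral between $M \setminus K_\eta$ and $K_\eta$. On $M \setminus K_\eta$ one has $b(x) \geq b_0 + 2\eta$, together with $\lambda \leq (b_0+\eta)p$ and $|\dd \Phi|^2 \leq p^{2\alpha}$; since $\alpha < 1/2$ (and $3/4 < 1$), the bracket is at least $\eta p - \grandO(p^{\max(3/4,\, 2\alpha)}) \geq \eta p / 2$ for $p$ large. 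On $K_\eta$, $\Phi \equiv 0$ so $e^\Phi \psi = \psi$, and the bracket is trivially bounded below by $-Cp$. Moving the $K_\eta$ contribution to the right-hand side gives
$$\frac{\eta p}{2} \int_{M \setminus K_\eta} |e^\Phi \psi|^2 \, \dd \nu_g \leq C p \Vert \psi \Vert^2,$$
which combined with the trivial $\int_{K_\eta} |e^\Phi \psi|^2 = \int_{K_\eta} |\psi|^2 \leq \Vert \psi \Vert^2$ closes the proof.

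The main technical nuisance is justifying the Agmon–Persson identity for the unbounded Lipschitz weight $\Phi$; this is dispatched by the truncation argument described above. The mechanism at the heart of the final step is precisely why the hypothesis $\alpha < 1/2$ appears: it guarantees $|\dd \Phi|^2 = \grandO(p^{2\alpha})$ is negligible compared to the spectral gap $\eta p$ between $\lambda$ and the threshold $(b_0 + 2\eta)p$ supplied by Lemma \ref{Lemma01-minoration}. Beyond these routine points I foresee no substantive obstacle.
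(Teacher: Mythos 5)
Your proof is correct and follows essentially the same route as the paper: the Agmon--Persson identity for $\Phi = p^\alpha\,\dd(\cdot,K_\eta)$, Lemma~\ref{Lemma01-minoration} applied to $e^\Phi\psi$, a split of the resulting integral into $K_\eta$ and its complement, and the observation that on $K_\eta^c$ the bracket $\eta p - \grandO(p^{\max(3/4,2\alpha)})$ dominates precisely because $\alpha<1/2$. The only difference is cosmetic: you truncate the weight by $\min(\Phi,N)$ and pass to the limit by monotone convergence, whereas the paper truncates via a cutoff $\chi_m$ composed with the distance and invokes Fatou's lemma; both are equivalent here (and, since $M$ is compact, the weight is already bounded, so the truncation is mainly insurance against the weight being merely Lipschitz).
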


\begin{proof}
Let $\Phi : M \rightarrow \R$ be a Lipschitz function. The Agmon formula is:
\begin{equation}\label{eq-AgmonFormula}
\langle \Delta^{L^p} e^{\Phi} \psi, e^{\Phi} \psi \rangle = \lambda \Vert e^{\Phi} \psi \Vert^2 + \Vert \dd \Phi e^{\Phi} \psi \Vert^2.
\end{equation}
Using Lemma \ref{Lemma01-minoration}, we deduce that:
\begin{align*}
\int ( pb(x) - C_0p^{3/4} - (1+ C_0 p^{-1/4})(\lambda +\vert \dd \Phi \vert^2)  ) \vert e^{\Phi} \psi \vert^2 \dd x\leq 0.
\end{align*}
We split this integral into two parts.
\begin{align*}
\int_{K^c_{\eta}} & ( pb(x) - C_0p^{3/4} - (1+ C_0 p^{-1/4})(\lambda +\vert \dd \Phi \vert^2)  ) \vert e^{\Phi} \psi \vert^2 \dd x \\
& \leq \int_{K_{\eta}} ( -pb(x) + C_0p^{3/4} + (1+ C_0 p^{-1/4})(\lambda +\vert \dd \Phi \vert^2)  ) \vert e^{\Phi} \psi \vert^2 \dd x
\end{align*}
We choose $\Phi$:
$$\Phi_m(x) = \chi_m(d(x,K_{\eta}))p^{\alpha}, \quad \text{for } m>0,$$
where $\chi_m(t) = t$ for $t<m$, $\chi_m(t) =0$ for $t>2m$, and $\chi_m'$ uniformly bounded with respect to $m$. Since $\Phi_m=0$ on $K_{\eta}$ and $ p b(x) - C_0 p^{3/4} > 0$ for $p$ large enough, we have:
\begin{align*}
\int_{K^c_{\eta}} & ( pb(x) - C_0p^{3/4} - (1+ C_0 p^{-1/4})(\lambda +\vert \dd \Phi_m \vert^2)  ) \vert e^{\Phi_m} \psi \vert^2 \dd x \\
& \leq (b_0 + \eta) p \int_{K_{\eta}} (1+ C_0 p^{-1/4}) \vert \psi \vert^2 \dd x \leq C p \Vert \psi \Vert^2.
\end{align*}
Moreover, since $\lambda \leq (b_0 + \eta) p$ and $\vert \dd \Phi_m \vert^2 \leq C p^{2 \alpha}$,
\begin{align*}
\int_{K^c_{\eta}} (p b(x) - C_0 p^{3/4} - (1+ C_0 p^{-1/4})(b_0 p + \eta p + Cp^{2\alpha}) \vert e^{\Phi_m} \psi \vert^2 \dd x &\leq C p \Vert \psi \Vert^2\\
p \int_{K^c_{\eta}} (b(x) - (b_0 + \eta) - C_0p^{-1/4} - \tilde{C}p^{2\alpha -1}) \vert e^{\Phi_m} \psi \vert^2 \dd x &\leq C p \Vert \psi \Vert^2,
\end{align*}
for $p$ large enough. But $b(x) > b_0 + 2 \eta$ on $K_{\eta}^c$, so there is a $\delta >0$ and $p_0>0$ such that,  for $p\geq p_0$:
\begin{align*}
\delta \int_{K^c} \vert e^{\Phi_m} \psi \vert^2 \dd q \leq C \Vert \psi \Vert^2.
\end{align*}
Since $\Phi_m = 0$ on $K$, we get a new $C>0$ such that:
$$\Vert e^{\Phi_m} \psi \Vert^2 \leq C \Vert \psi \Vert^2,$$
and we can use Fatou's lemma in the limit $m \rightarrow + \infty$ to get the desired inequality.
\end{proof}

\begin{corollary}\label{CoroAgmon}
Let $\varepsilon >0$ and $\chi : M \rightarrow [0,1]$ be a smooth cutoff function, being 1 on a small neighborhood of $$K_{\eta} + \varepsilon = \lbrace x;\quad  \dd(x, K_{\eta}) < \varepsilon \rbrace.$$ Then, for any eigenpair $(\lambda,\psi)$ of $\Delta^{L^p}$, with $\lambda \leq (b_0 + \eta) p$ we have:
$$\psi = \chi \psi + \grandO(e^{-\varepsilon p^{\alpha}})\Vert \psi \Vert,$$
and
$$\nabla^{L^p} (\chi \psi) = \nabla^{L^p} \psi + \grandO (p^{1/2} e^{-\varepsilon p^{\alpha}})\Vert \psi \Vert,$$
uniformly with respect to $(\lambda, \psi)$.
\end{corollary}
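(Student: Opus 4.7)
The plan is to deduce both assertions from the Agmon estimate of Theorem~\ref{Thm-Agmon}, combined with a standard IMS-type localisation identity for the Bochner quadratic form. Because $\chi$ equals $1$ on a neighborhood of $K_\eta + \varepsilon$, both $1-\chi$ and $\dd\chi$ are supported inside $\lbrace x : \dd(x,K_\eta) \geq \varepsilon \rbrace$, where $e^{\dd(x,K_\eta)p^\alpha} \geq e^{\varepsilon p^\alpha}$. Theorem~\ref{Thm-Agmon} therefore yields
$$\Vert (1-\chi)\psi\Vert^2 \leq e^{-2\varepsilon p^\alpha}\int_M e^{2\dd(x,K_\eta)p^\alpha}|\psi|^2\,\dd x \leq C e^{-2\varepsilon p^\alpha}\Vert\psi\Vert^2,$$
and by the same token $\Vert |\dd\chi|\,\psi\Vert \leq C e^{-\varepsilon p^\alpha}\Vert\psi\Vert$. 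The first of these is already the first assertion of the corollary.

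For the gradient estimate, the Leibniz rule gives
$$\nabla^{L^p}(\chi\psi) - \nabla^{L^p}\psi = \dd\chi \otimes \psi - (1-\chi)\,\nabla^{L^p}\psi,$$
whose first summand is $\grandO(e^{-\varepsilon p^\alpha})\Vert\psi\Vert$ by the above. For the second, I would invoke the IMS-type identity
$$\Vert \nabla^{L^p}(\phi\psi)\Vert^2 = \mathrm{Re}\,\langle \Delta^{L^p}\psi, \phi^2\psi\rangle + \Vert |\dd\phi|\,\psi\Vert^2, \qquad \phi \in \Cinf(M,\R),$$
which is obtained by expanding $|\dd\phi \otimes \psi + \phi\,\nabla^{L^p}\psi|^2$ and recognising the cross term via $Q(\psi, \phi^2\psi) = \langle \Delta^{L^p}\psi, \phi^2\psi\rangle$. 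Applying it with $\phi = 1-\chi$ and using $\Delta^{L^p}\psi = \lambda\psi$ with $\lambda \leq (b_0+\eta)p$ gives
$$\Vert \nabla^{L^p}((1-\chi)\psi)\Vert^2 = \lambda\Vert(1-\chi)\psi\Vert^2 + \Vert|\dd\chi|\,\psi\Vert^2 \leq Cp\,e^{-2\varepsilon p^\alpha}\Vert\psi\Vert^2.$$
Writing $(1-\chi)\nabla^{L^p}\psi = \nabla^{L^p}((1-\chi)\psi) + \dd\chi \otimes \psi$ and combining the estimates yields the claimed $\grandO(p^{1/2}e^{-\varepsilon p^\alpha})\Vert\psi\Vert$ bound.

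I do not anticipate any real obstacle: all the exponential decay is already provided by Theorem~\ref{Thm-Agmon}, and the only mildly non-trivial point is the appearance of the $p^{1/2}$ factor in the gradient estimate, which comes directly from the factor $\sqrt{\lambda} \leq \sqrt{(b_0+\eta)p}$ produced by the IMS identity. The argument is insensitive to the precise width of the ``small neighborhood'' of $K_\eta + \varepsilon$ on which $\chi \equiv 1$, since that only affects the implicit constant $C$.
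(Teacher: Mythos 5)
Your argument is correct, and it takes a genuinely different route to the gradient estimate than the paper does. The paper first establishes a global weighted gradient decay estimate, $\Vert e^{\Phi p^{\alpha}}\nabla^{L^p}\psi\Vert \leq Cp^{1/2}\Vert\psi\Vert$ with $\Phi(x)=\dd(x,K_\eta)$, by applying the Agmon formula~\eqref{eq-AgmonFormula} with the exponential weight $e^{\Phi p^\alpha}$ together with a triangle inequality; it then decomposes $\nabla^{L^p}((1-\chi)\psi) = (\nabla^{L^p}\chi)\psi + (1-\chi)\nabla^{L^p}\psi$, controlling the first term with an auxiliary wider cutoff $\overline{\chi}$ and the second with the weighted gradient estimate. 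You instead apply the same Agmon/IMS identity directly with the compactly supported weight $\phi = 1-\chi$, which immediately gives $\Vert\nabla^{L^p}((1-\chi)\psi)\Vert^2 = \lambda\Vert(1-\chi)\psi\Vert^2 + \Vert|\dd\chi|\psi\Vert^2$; both terms on the right are then controlled solely by the $\Ld$ decay of Theorem~\ref{Thm-Agmon} and $\lambda\leq(b_0+\eta)p$. This bypasses the intermediate weighted gradient estimate and the auxiliary cutoff $\overline{\chi}$ entirely, making the argument a bit shorter. The trade-off is that the paper's weighted estimate $\Vert e^{\Phi p^\alpha}\nabla^{L^p}\psi\Vert \leq Cp^{1/2}\Vert\psi\Vert$ is a reusable statement of independent interest (pointwise-in-space exponential decay of the covariant gradient), whereas your IMS step is tailored to the specific cutoff at hand; both deliver the same $p^{1/2}$ factor, which in either case traces back to $\sqrt{\lambda}\leq\sqrt{(b_0+\eta)p}$. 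The remainder of your proof (the first estimate and the final recombination via the Leibniz rule) is the same as the paper's in substance.
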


\begin{proof}
By Theorem \ref{Thm-Agmon}, we have:
\begin{equation}\label{eq01-CoroAgmon}
\Vert (1- \chi) \psi \Vert^2 \leq \int_{(K_{\eta}+ \varepsilon)^c} \vert \psi \vert^2 \dd q \leq \int_M e^{-2 \varepsilon p^{\alpha}} \vert e^{\dd (x,K_{\eta})p^{\alpha}} \psi \vert^2 \dd x \leq C e^{-2 \varepsilon p^{\alpha}} \Vert \psi \Vert^2,
\end{equation}
which gives the first estimates. Moreover, we have with $\Phi(x) = \dd (x, K_{\eta})$,
$$ \Vert e^{\Phi p^{\alpha}} \nabla^{L^p} \psi \Vert \leq \Vert \nabla^{L^p} (e^{\Phi p^{\alpha}} \psi ) \Vert + p^{\alpha} \Vert \dd \Phi e^{\Phi p^{\alpha}} \psi \Vert,$$
and using Agmon's formula \ref{eq-AgmonFormula} and Theorem \ref{Thm-Agmon}:
$$ \Vert \nabla^{L^p} ( e^{\Phi p^{\alpha}} \psi) \Vert^2 = \lambda \Vert e^{\Phi p^{\alpha}} \psi \Vert^2 + p^{2\alpha} \Vert \dd \Phi e^{\Phi p^{\alpha}} \psi \Vert^2 \leq C^2 p \Vert \psi \Vert^2.$$
Thus,
\begin{equation}\label{eq02-CoroAgmon}
\Vert e^{\Phi p^{\alpha}} \nabla^{L^p} \psi \Vert \leq C p^{1/2} \Vert \psi \Vert^2.
\end{equation}
We can use these Agmon estimates on $\nabla^{L^p} \psi$ to get our second result. 
\begin{equation}\label{eq03-CoroAgmon}
\Vert \nabla^{L^p} ((1- \chi) \psi )\Vert \leq \Vert (\nabla^{L^p} \chi) \psi \Vert + \Vert (1- \chi) \nabla^{L^p} \psi \Vert
\end{equation}
The first term is dominated by
\begin{equation}
\Vert (\nabla^{L^p} \chi)\psi \Vert \leq C \Vert (1-\overline{\chi}) \psi \Vert
\end{equation}
where $\overline{\chi}$ is a cutoff function such that $\overline{\chi} = 1$ on $K_{\eta} + \varepsilon$ and $\overline{\chi} = 0$ on $\supp (1-\chi)$. We can apply \eqref{eq01-CoroAgmon} to $\overline{\chi}$ to get:
\begin{equation}\label{eq04-CoroAgmon}
\Vert (\nabla^{L^p} \chi) \psi \leq C e^{-\varepsilon p^{\alpha}} \Vert \psi \Vert.
\end{equation}
The second term of \eqref{eq03-CoroAgmon} is dominated as in \eqref{eq01-CoroAgmon}, using \eqref{eq02-CoroAgmon}:
\begin{equation}\label{eq05-CoroAgmon}
\Vert (1-\chi) \nabla^{L^p} \psi \Vert \leq C p^{1/2} e^{-\varepsilon p^{\alpha}} \Vert \psi \Vert.
\end{equation}
Finally, \eqref{eq03-CoroAgmon} with \eqref{eq04-CoroAgmon} and \eqref{eq05-CoroAgmon} yields
\begin{equation*}
\Vert \nabla^{L^p} ((1- \chi) \psi) \Vert \leq C p^{1/2} e^{-\varepsilon p^{\alpha}} \Vert \psi \Vert.
\end{equation*}

\end{proof}

\subsection{Comparison of the spectrum of $\Delta^{L^p}$ and $\Lpj$}

We recall that the minimum $b_0$ of $b$ is reached at $x_1, \cdots, x_N$ in a non-degenerate way. For $\eta>0$ small enough, the compact set $K_{\eta} = \lbrace b(x) \leq b_0 + \eta \rbrace$ has $N$ disjoint connected components $K_{\eta}^{(j)}$ such that $x_j \in K_{\eta}^{(j)}$. We fix the value of $\eta$, and we take $U_j$ a neighborhood of $K_{\eta}^{(j)}$. For $\varepsilon >0$ sufficiently small, $K_{\eta}^{(j)} + 2 \varepsilon \subset U_j$.\\

We denote by $B_j$ the restriction of $B$ to $U_j$. $\Lpj$ is the Dirichlet realisation of $(\dd + i p A_j)^*(\dd + i p A_j)$, with $A_j \in \Omega^1(U_j,L)$ such that $B_j = \dd A_j$. It is the self adjoint operator associated to the quadratic form:
\begin{equation}
Q_j(u,v) = \int_{U_j} (\dd + ip A_j)u \overline{(\dd + i p A_j )v} \dd x, \quad \forall u, v \in \mathsf{H}^1_0(U_j).
\end{equation}

Let us denote by 
\begin{equation}
K_p = \min \left[ N(\Delta^{L^p}, (b_0 + \eta)p) ; N \left( \bigoplus_{j=1}^N \Lpj , (0, b_0 + \eta)p \right) \right].
\end{equation}

We split the proof of Theorem \ref{MainTheorem} into two Lemmas.

\begin{lemma}
Let $\alpha \in (0,1/2)$. We have:
$$\lambda_k( \bigoplus_{j=1}^N \Lpj) \leq \lambda_k(\Delta^{L^p}) + \grandO(\exp(-\varepsilon p^{\alpha})),$$
uniformily with respect to $k \in [1, K_p]$.
\end{lemma}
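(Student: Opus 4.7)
My plan is to use the min-max principle, constructing a $k$-dimensional trial subspace in $\bigoplus_j \mathsf{H}_0^1(U_j)$ from the first $k$ eigensections of $\Delta^{L^p}$ after cutting off and trivializing. Let $(\psi_i)_{1 \leq i \leq k}$ be an $\Ld$-orthonormal family of eigensections associated to $\lambda_1(\Delta^{L^p}) \leq \cdots \leq \lambda_k(\Delta^{L^p})$; since $k \leq K_p$, all these eigenvalues lie below $(b_0 + \eta)p$, so the Agmon estimates of Corollary \ref{CoroAgmon} apply. Choose cutoffs $\chi_j \in \Cinf_0(U_j)$ with pairwise disjoint supports, each equal to $1$ on a $\varepsilon$-neighborhood of $K_\eta^{(j)}$, and set $\chi = \sum_j \chi_j$. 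Since $U_j$ is small, trivialize $L^p$ over $U_j$ using a unit section $e_j$ satisfying $\nabla^{L^p} e_j = i p A_j \otimes e_j$, write $\chi_j \psi_i = u_i^{(j)} e_j$ with $u_i^{(j)} \in \Cinf_0(U_j)$, and form the vector $\tilde{u}_i = (u_i^{(1)}, \ldots, u_i^{(N)}) \in \bigoplus_j \mathsf{H}_0^1(U_j)$.

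The next step is to check that the $(\tilde{u}_i)$ are almost $\Ld$-orthonormal and that the quadratic form $Q_{\oplus} = \bigoplus_j Q_j$ is almost diagonal on them. Because the supports of the $\chi_j$ are disjoint and $|e_j|_{h^p} = 1$, we get
$$\langle \tilde{u}_i, \tilde{u}_{i'} \rangle_{\oplus} = \int_M \chi^2 \, h^p(\psi_i, \psi_{i'}) \, \dd \nu_g = \langle \chi \psi_i, \chi \psi_{i'} \rangle = \delta_{ii'} + \grandO(e^{-\varepsilon p^\alpha}),$$
by the first estimate of Corollary \ref{CoroAgmon}. Similarly, the relation between $(\dd + ipA_j)u_i^{(j)}$ and $\nabla^{L^p}(\chi_j \psi_i)$, together with the disjointness of the supports of the forms $\nabla^{L^p}(\chi_j \psi_i) = (\dd \chi_j) \psi_i + \chi_j \nabla^{L^p}\psi_i$, yields
$$Q_{\oplus}(\tilde{u}_i, \tilde{u}_{i'}) = \sum_j \langle \nabla^{L^p}(\chi_j \psi_i), \nabla^{L^p}(\chi_j \psi_{i'}) \rangle = \langle \nabla^{L^p}(\chi \psi_i), \nabla^{L^p}(\chi \psi_{i'}) \rangle.$$
Using the second estimate of Corollary \ref{CoroAgmon} and $\|\nabla^{L^p}\psi_i\|^2 = \lambda_i = \grandO(p)$, this equals $\lambda_i \delta_{ii'} + \grandO(p^{1/2} e^{-\varepsilon p^\alpha}) \cdot \grandO(p^{1/2}) = \lambda_i \delta_{ii'} + \grandO(p \, e^{-\varepsilon p^\alpha})$.

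Finally I conclude by min-max. For $p$ large the Gram matrix $I + \grandO(e^{-\varepsilon p^\alpha})$ is invertible, so $V := \mathrm{span}(\tilde{u}_1, \ldots, \tilde{u}_k)$ has dimension $k$. For $v = \sum_i a_i \tilde{u}_i$,
$$\|v\|_{\oplus}^2 = \|a\|^2 \bigl(1 + \grandO(e^{-\varepsilon p^\alpha})\bigr), \qquad Q_{\oplus}(v,v) \leq \lambda_k(\Delta^{L^p}) \|a\|^2 + \grandO(p \, e^{-\varepsilon p^\alpha}) \|a\|^2,$$
where the first inequality uses $\lambda_i \leq \lambda_k$ for $i \leq k$. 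Absorbing the polynomial factor $p$ into the exponential by replacing $\varepsilon$ with a slightly smaller positive constant, one finds $\max_{v \in V \setminus \{0\}} Q_{\oplus}(v,v)/\|v\|^2 \leq \lambda_k(\Delta^{L^p}) + \grandO(e^{-\varepsilon p^\alpha})$, and the min-max principle gives the claimed inequality.

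The only real subtlety is bookkeeping: one must observe that the disjointness of $\mathrm{supp}\,\chi_j$ makes all off-diagonal cross-terms (both on $\Ld$ and on the gradient side) vanish exactly, so that Corollary \ref{CoroAgmon} reduces the comparison to the single global cutoff $\chi$. The $p^{1/2}$ loss in the gradient Agmon estimate, squared, produces a benign polynomial prefactor that is trivially absorbed into the exponential.
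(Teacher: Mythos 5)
Your proof follows the same strategy as the paper: cut off the eigensections $\psi_i$ of $\Delta^{L^p}$ near the wells, use the local trivialisations $e_j$ to convert them into functions $u_i^{(j)} \in \mathsf{H}_0^1(U_j)$, and feed the span of the resulting $\tilde{u}_i$ into the min-max characterisation of $\lambda_k(\bigoplus_j \Lpj)$, controlling errors through the Agmon estimates of Corollary \ref{CoroAgmon}.

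You are actually somewhat more careful than the paper at the last step. The paper evaluates only the diagonal Rayleigh quotients $\langle \bigoplus_j \Lpj u_k, u_k\rangle$ and then asserts without elaboration that the span of $(u_1,\ldots,u_k)$ is $k$-dimensional and that min-max applies. To make this fully rigorous one must (i) show that the Gram matrix $(\langle \tilde u_i, \tilde u_{i'}\rangle)$ is $I + \grandO(e^{-\varepsilon p^\alpha})$, which simultaneously gives linear independence and the correct normalisation, and (ii) bound the off-diagonal form entries $Q_\oplus(\tilde u_i, \tilde u_{i'})$ for $i \neq i'$. You do both: you use $\langle \nabla^{L^p}\psi_i, \nabla^{L^p}\psi_{i'}\rangle = \lambda_i \delta_{ii'}$ to identify the leading diagonal term and correctly track the $\grandO(p\,e^{-\varepsilon p^\alpha})$ error for the cross-terms via the $p^{1/2}$ loss in the gradient Agmon estimate and the $\grandO(p^{1/2})$ size of $\|\nabla^{L^p}\psi_{i'}\|$. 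The only implicit point, which you address by ``replacing $\varepsilon$ with a slightly smaller positive constant'', is that passing from entrywise bounds on the error matrices to operator-norm bounds costs a factor growing at most polynomially in $p$ (since $K_p$ is polynomially bounded), which is indeed harmless against the stretched exponential. In short: same route as the paper, with the min-max bookkeeping done properly.
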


\begin{proof}
We prove this using the min-max principle. For $k \in [1,J_p]$, let $\psi_k$ be the normalized eigenfunction associated to $\lambda_k(\Delta^{L^p})$. We will define the quasimode $u_{j,k} \in \mathcal{C}^{\infty}_0(U_j)$ using a local trivialisation of $L^p$ on $U_j$. Let $e_j \in \Cinf(U_j,L)$ be the non-vanishing local section of $L$ such that, for any $u \in \Cinf(U_j)$,
\begin{equation}\label{eq01-Lemma02}
\nabla^{L^p} (u e_j) = \left[(\dd + ip A_j)u \right] e_j.
\end{equation}
Let $\chi_j \in \Cinf_0(U_j)$ be a smooth cutoff function, such that $\chi_j = 1$ on $K_{\eta}^{(j)} + \varepsilon$. We define $u_{j,k} \in \Cinf_0(U_j)$ by $\chi_j \psi_k = u_{j,k} e_j$, and
$$u_k = u_{1,k} \oplus ... \oplus u_{N,k}.$$
Then
\begin{align*}
\langle \bigoplus_{j} \Lpj u_k, u_k \rangle = \sum_{j=1}^N \langle \Lpj u_{j,k}, u_{j,k} \rangle = \sum_{j=1}^N \Vert (\dd + i p A_j)u_{j,k} \Vert^2.
\end{align*}
Moreover, by \eqref{eq01-Lemma02},
$$\Vert (\dd + ip A_j) u_{j,k} \Vert^2 = \int_{U_j} \vert (\dd + i p A_j) u_{j,k} \vert^2 \dd x = \int_{U_j} \vert \nabla^{L^p} (\chi_j \psi_k) \vert^2 \dd x.$$
Now, $\chi = \sum_{j=1}^N \chi_j$ satisfies the assumptions of Corollary \ref{CoroAgmon} (with $2 \varepsilon$ instead of $\varepsilon$). Thus,
$$\langle \bigoplus_j \Lpj u_k, u_k \rangle = \int_M \vert \nabla^{L^p} (\chi \psi_k) \vert^2 \dd x = \Vert \nabla^{L^p} \psi_k \Vert^2 + \grandO(p^{1/2} e^{-2 \varepsilon p^{\alpha}})\Vert \psi_k \Vert,$$
uniformly with respect to $k$. $\psi_k$ being the eigensection associated to $\lambda_k(\Delta^{L^p})$, it remains:
$$\langle \bigoplus_j \Lpj u_k, u_k \rangle = \left( \lambda_k(\Delta^{L^p}) + \grandO (p{1/2} e^{-2\varepsilon p^{\alpha}}) \right)\Vert \psi_k \Vert.$$
This is true for every $k \in [1,K_p]$. Hence, for $1 \leq i \leq k \leq K_p$ we have
$$\langle \bigoplus_j \Lpj u_i, u_i \rangle \leq \left( \lambda_k(\Delta^{L^p}) + \grandO (p^{1/2}e^{-2\varepsilon p^{\alpha}}) \right)\Vert \psi_k \Vert,$$
and the Lemma follows from the min-max principle, because the vector space ranged by $(u_i)_{1 \leq i \leq k}$ is $k$-dimensional (and $p^{1/2} e^{-2 \varepsilon p^{\alpha}} = \grandO( e^{-\varepsilon p^{\alpha}})$).
\end{proof}

The reverse inequality is proven similarily.

\begin{lemma}
Let $\alpha \in (0,1/2)$. We have:
$$\lambda_k( \Delta^{L^p}) \leq \lambda_k(\bigoplus_{j=1}^N \Lpj) + \grandO(\exp(-\varepsilon p^{\alpha})),$$
uniformily with respect to $k \in [1, K_p]$.
\end{lemma}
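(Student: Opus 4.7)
The plan is to mirror the previous lemma, applying the min-max principle with the roles of $\Delta^{L^p}$ and $\bigoplus_j \Lpj$ exchanged: I will construct quasimodes for $\Delta^{L^p}$ out of low-lying eigenfunctions of $\bigoplus_j \Lpj$.

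The first step is to establish the analogues of Theorem \ref{Thm-Agmon} and Corollary \ref{CoroAgmon} for each $\Lpj$ on $U_j$: for any eigenpair $(\mu, u)$ of $\Lpj$ with $\mu \leq (b_0 + \eta)p$,
$$\Vert e^{\dd(\cdot, K_\eta^{(j)})p^\alpha} u \Vert \leq C \Vert u \Vert, \qquad \Vert e^{\dd(\cdot, K_\eta^{(j)})p^\alpha} (\dd + i p A_j) u \Vert \leq C p^{1/2} \Vert u \Vert.$$
The proof reproduces verbatim the arguments of Theorem \ref{Thm-Agmon} and Corollary \ref{CoroAgmon}, but is simpler: $\Lpj$ is already a magnetic Laplacian in a trivialization of $L^p$ on $U_j$, the local minoration \eqref{eq03-Lemma01} is exactly the tool that powered Lemma \ref{Lemma01-minoration}, and the Dirichlet boundary conditions make all boundary contributions in Agmon's formula vanish. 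This is where I expect the bulk of the (still routine) technical work to sit.

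Next, for each $k \in [1, K_p]$, I would pick an $\Ld$-orthonormal family $(U^{(i)})_{i=1}^k$ of eigensections of $\bigoplus_j \Lpj$ corresponding to the eigenvalues $\mu_1 \leq \dots \leq \mu_k \leq (b_0 + \eta)p$, chosen so that each $U^{(i)} = (u_1^{(i)}, \ldots, u_N^{(i)})$ has a single non-zero component $u_{j(i)}^{(i)}$, an eigenfunction of $\mathcal{L}_p^{(j(i))}$ with eigenvalue $\mu_i$. With cutoffs $\chi_j \in \Cinf_0(U_j)$ equal to $1$ on a neighborhood of $K_\eta^{(j)} + \varepsilon$ and the trivializing sections $e_j$ of \eqref{eq01-Lemma02}, I would define quasimodes
$$\psi^{(i)} := \sum_{j=1}^N (\chi_j u_j^{(i)}) e_j,$$
extended by zero outside $\bigcup_j U_j$. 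These are smooth sections of $L^p$ on $M$ whose local pieces have pairwise disjoint supports.

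From there the bookkeeping closely follows that of the previous lemma. The Agmon estimate gives $\chi_j u_j^{(i)} = u_j^{(i)} + \grandO(e^{-\varepsilon p^\alpha})$ in $\Ld$, so $\langle \psi^{(i)}, \psi^{(i')}\rangle = \delta_{ii'} + \grandO(e^{-\varepsilon p^\alpha})$ and the family $(\psi^{(i)})_{i=1}^k$ spans a $k$-dimensional subspace for $p$ large. Using \eqref{eq01-Lemma02}, the disjointness of supports, the Leibniz rule, and the decay of both $u_j^{(i)}$ and $(\dd + i p A_j)u_j^{(i)}$ on the support of $\dd\chi_j$, one obtains
$$\langle \Delta^{L^p} \psi^{(i)}, \psi^{(i')}\rangle = \sum_{j=1}^N Q_j(u_j^{(i)}, u_j^{(i')}) + \grandO(e^{-\varepsilon p^\alpha}) = \mu_i \delta_{ii'} + \grandO(e^{-\varepsilon p^\alpha}).$$
The min-max principle applied to the span of $(\psi^{(i)})_{i=1}^k$ then yields $\lambda_k(\Delta^{L^p}) \leq \mu_k + \grandO(e^{-\varepsilon p^\alpha})$, which is exactly the claim.
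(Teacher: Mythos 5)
Your proof is correct and follows essentially the same route as the paper's: establish Agmon decay for the Dirichlet realizations $\Lpj$, cut off the low-lying eigenfunctions $u_k$ of $\bigoplus_j \Lpj$ near $K_\eta^{(j)}$, transport them to sections of $L^p$ via the trivializing sections $e_j$, and feed the resulting quasimodes into the min-max principle. The only (minor, and welcome) difference is that you explicitly verify quasi-orthonormality of the test family $(\psi^{(i)})_{i\leq k}$ and the off-diagonal quadratic-form estimates, whereas the paper's version computes only the Rayleigh quotient of a single $s_k$ and leaves the $k$-dimensionality of the test subspace implicit.
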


\begin{proof}
The $k$-th eigenvalue of $\bigoplus_{j=1}^N \Lpj$ is given by an eigenpair $(\mu_{k},u_k)$ of $\mathcal{L}_p^{(j_k)}$ for some $j_k \in \lbrace 1, \cdots, N \rbrace$. Let $\chi_k \in \Cinf_0(U_{j_k})$ be a cutoff function equal to $1$ on $K_{\eta}^{(j_k)}+2\varepsilon$. Then, Agmon estimates (Theorem \ref{Thm-Agmon}) for $\Lpj$ imply that
\begin{align*}
(\dd + i p A) u_k = (\dd + i p A) (\chi_k u_k) + \grandO(e^{-\varepsilon p^{\alpha}})\Vert u_k \Vert
\end{align*}
uniformly with respect to $k$. We define $s_k = \chi_k u_k e_{j_k}$, where $e_{j_k}$ satisfies \eqref{eq01-Lemma02}, and we extend $s_k$ by $0$ outside $U_{j_k}$. Then,
$$
\begin{matrix*}[l]
\langle \Delta^{L^p} s_k, s_k \rangle &= \int_{U_{j_k}} \vert ( \dd + i p A )\chi_k u_k \vert^2 \dd x \\ &= \int_{U_{j_k}} \vert ( \dd + i p A ) u_k \vert^2 \dd x + \grandO(e^{-\varepsilon p^{\alpha}}) \\ &= \mu_k \Vert u_k \Vert^2 + \grandO(e^{- \varepsilon p^{\alpha}}).
\end{matrix*}$$
Hence the min-max principle implies 
$$\lambda_k(\Delta^{L^p}) \leq \mu_k + \grandO(e^{- \varepsilon p^{\alpha}}),$$
which is the desired inequality.
\end{proof}

\printbibliography

\end{document}